\documentclass[11pt]{article}

\marginparwidth 0pt \oddsidemargin 0pt \evensidemargin 0pt
\topmargin -1.2 cm \textheight 23.5 truecm \textwidth 16.0 truecm

\usepackage{amsmath}
\usepackage{amssymb}
\usepackage{amsthm}
\usepackage{url}
\usepackage{latexsym}
\usepackage{amsbsy}
\usepackage{color}
\usepackage[colorlinks=true, linkcolor=blue, filecolor=brown,citecolor=blue,pagebackref=true,driverfallback=true]{hyperref}
% MATH -----------------------------------------------------------------------------------------------------------------------------------------

\newcommand{\forme}[1]{}

\newcommand{\set}[1]{\left\{#1\right\}}

\newcommand{\To}{\longrightarrow}

\newcommand{\R}{\mathcal{R}}

\newcommand{\C}{\mathcal{C}}

\newcommand{\T}{\mathcal{T}}

%-----------------------------------------------------------------\

%-----------------------------------------------------------------\

%------------------------------------------------------------------\
%\definecolor{gold}{rgb}{0.85,.66,0}
%------------------------------------------------------------------\
%\theoremstyle{theorem}
\newtheorem{theorem}{Theorem}[section]
\newtheorem{corollary}[theorem]{Corollary}
\newtheorem{lemma}[theorem]{Lemma}
\newtheorem{proposition}[theorem]{Proposition}

\theoremstyle{definition}
\newtheorem{definition}{Definition}[section]
\newtheorem{exmpl}[definition]{Example}
\newtheorem{remark}[theorem]{Remark}

%-----------------------------------------------------------------------\
\def\thm{\begin{theorem}}
\def\ethm{\end{theorem}}
\def\prop{\begin{proposition}}
\def\eprop{\end{proposition}}
\def\rem{\begin{remark}}
\def\erem{\end{remark}}
\def\defi{\begin{definition}}
\def\edefi{\end{definition}}
\def\nmu{\begin{enumerate}}
\def\enmu{\end{enumerate}}
\def\qtn{\begin{equation}}
\def\eqtn{\end{equation}}
\def\qtnl{\begin{equation*}}
\def\eqtnl{\end{equation*}}
\def\lem{\begin{lemma}}
\def\elem{\end{lemma}}
\def\cor{\begin{corollary}}
\def\ecor{\end{corollary}}
\def\prf{\begin{proof}}
\def\eprf{\end{proof}}
\def\css{\begin{cases}}
\def\ecss{\end{cases}}
\def\exm{\begin{exmpl}}
\def\eexm{\end{exmpl}}
%-------------------------------------------------------------------------------\
\title{\textbf{Coherent configurations over copies of association schemes of prime order}}
\author{Reza Sharafdini \footnote{Corresponding author}
\\
{\small Department of Mathematics, Persian Gulf University}\\
{\small Bushehr 7516913817, Iran}\\
{\small sharafdini@pgu.ac.ir}\\
Mitsugu Hirasaka\\
{\small Department of Mathematics, College of Natural Sciences, Pusan National University,}\\
{\small Busan 609--735, South Korea.}\\
{\small hirasaka@pusan.ac.ir}
}

\begin{document}

\maketitle

\begin{abstract}
Let $G$ be a group acting faithfully and transitively on $\Omega_i$ for $i=1,2$.
A famous theorem by Burnside implies the following fact:
If $|\Omega_1|=|\Omega_2|$ is a prime and the rank of one of the actions is greater than two,
then the actions are equivalent, or equivalently $|(\alpha,\beta)^G|=|\Omega_1|=|\Omega_2|$
for some $(\alpha,\beta)\in \Omega_1\times \Omega_2$.

In this paper we consider a combinatorial analogue to this fact
through the theory of coherent configurations, and give
some arithmetic sufficient conditions for a coherent configuration with two homogeneous components of prime order to be
uniquely determined by one of the homogeneous components.
\end{abstract}

%%%%%%%%%%%%%%%%%%%%%%%%%%%%%%%%%%%Your article  %%%%%%%%%%%%%%%%%%%%%%%%%%%%%%%%%%%%%%%%%%%%%%%%%%%%%
\section{Introduction}
A famous theorem by Burnside states that each transitive permutation group of prime degree
with rank greater than two is Frobenius or regular. Since any Frobenius group of prime degree is a subgroup of one-dimensional affine group, it follows that such a permutation group is uniquely determined by its rank and degree up to equivalence of group actions.
Especially, if a group acts faithfully, transitively but not 2-transitively
on each of two sets of the same prime size,
then the two actions are equivalent. Let us formulate this fact in the following two paragraphs.

Let $G$ be a group acting transitively on $\Omega_i$ for $i=1,2$.
Then $G$ acts on $\Omega_i\times \Omega_j$
by
\[
(\alpha,\beta)^g=(\alpha^g,\beta^g)\quad\quad\mbox{for $(\alpha,\beta)\in \Omega_i\times \Omega_j$ \quad and \quad $g\in G$},
\]
for all $i,j=1,2$.
It is well-known that (e.g., see \cite[Lemma~1.6B]{dixon}) the following are equivalent:
\begin{enumerate}
\item[(a)] The action of $G$ on $\Omega_1$ is
equivalent to that on $\Omega_2$;
\item[(b)]  There exists $(\alpha,\beta)\in \Omega_1\times \Omega_2$
such that $G_\alpha=G_\beta$;
\item[(c)] There exists $(\alpha,\beta)\in \Omega_1\times \Omega_2$
such that $|(\alpha,\beta)^G|=|\Omega_1|=|\Omega_2|$.
\end{enumerate}
Note that the rank of the action of $G$ on $\Omega_i$ is equal to the number of orbits of $G$ on $\Omega_i\times \Omega_i$,
and if $G$ acts faithfully on $\Omega_i$,
then $G$ can be identified with a permutation group of $\Omega_i$.

Suppose that $G$ acts faithfully on $\Omega_i$ with $i=1,2$ and $|\Omega_1|=|\Omega_2|$ is a prime.
Then, as mentioned in the first paragraph, these actions are equivalent if the rank of one of the actions is greater than two,
and so there exists an orbit $R$ of $G$ on $\Omega_1\times \Omega_2$
such that $|R|=|\Omega_1|=|\Omega_2|$.

In this paper we consider a combinatorial analogy to this fact
through the theory of coherent configurations. The concept of coherent configurations was first introduced by Higman who published a series of papers (e.g., \cite{Higman1975}, \cite{Higman1976}, \cite{Higman1987})
to associate a lot of important criterions with group actions.
%The concept of coherent configurations was first introduced by Higman
%and a series of papers to introduce coherent configurations has been published
%(e.g., \cite{Higman1975}, \cite{Higman1976}, \cite{Higman1987})
%and a lot of important criterions to associate with group actions are given.

Here we define a coherent configuration, its intersection numbers and its fibers
according to the notations as in \cite{Inp2009}.
\begin{definition}\label{def:1}
Let $V$ be a finite set and $\mathcal{R}$ a partition of $V\times V$. We say that the pair $\mathcal{C}=(V,\mathcal{R})$ is a \textit{coherent configuration} if it satisfies the following:
%(see \cite{Inp2009} or \cite{Higman1987} for its background):
\begin{enumerate}
\item The diagonal relation $\Delta_V$ is a union of elements of $\mathcal{R}$
where we denote $\{(u,u)\mid u\in U\}$ by $\Delta_U$ for a set $U$.
\item For each $R\in \mathcal{R}$ its transpose $R^t=\set{(u,v)\mid (v,u)\in R}$ is an element of $\mathcal{R}$.
\item For all $R,S,T\in \mathcal{R}$ there exists a constant $c_{RS}^T$ such that\\
$$\mbox{$c_{RS}^T=|R(u)\cap S^t(v)|$ for all $(u,v)\in T$},$$
where we denote by $T(w)$ the set $\{z\in V\mid (w,z)\in T\}$ for $w\in V$ and $T\in \mathcal{R}$.
\end{enumerate}
\end{definition}
The constants $c_{RS}^T$ are called the \textit{intersection numbers}.
A subset $X$ of $V$ is called a \textit{fiber} of $\mathcal{C}$ if $\Delta_X\in \mathcal{R}$.
We denote the set of all fibers of $\mathcal{C}$ by $\mathrm{Fib}(\mathcal{C})$.
By Definition~\ref{def:1}(i), $V$ is partitioned into the fibers of $\mathcal{C}$, and by Definition~\ref{def:1}(i),(iii),
$\mathcal{R}$ is partitioned into
\[\Big\{\mathcal{R}_{X,Y}\mid X,Y\in \mathrm{Fib}(\mathcal{C})\Big\}\:\:\mbox{where}\:\: \mathcal{R}_{X,Y}=\Big\{R\in \mathcal{R}\mid R\subseteq X\times Y\Big\}.\]
Let $U$ be a union of fibers of $\mathcal{C}$.
Then the pair
\[\Big(U,\{R\in\mathcal{R}\mid R\subseteq U\times U\}\Big),\]
is also a coherent configuration, which is denoted by $\mathcal{C}_U$.

For $R\in \mathcal{R}_{X,Y}$ we denote $c_{RR^t}^{\Delta_X}$ by $d_R$.
Then, by two-way counting we have
\begin{equation}\label{eq:dc1}
|R|=d_R|X|=d_{R^t}|Y|.
\end{equation}

For $X\in \mathrm{Fib}(\mathcal{C})$, $\mathcal{C}_X$ is nothing but an \textit{association scheme}, i.e.,
a coherent configuration with only one fiber (see \cite{bi} or \cite{Zieschang2005} for its background).
For short we shall write $\mathcal{R}_{X,X}$ as $\mathcal{R}_X$ and
$\C_X$ is called a \textit{homogeneous component} of $\mathcal{C}$.
%In fact,
%a coherent configuration can be thought as a collection of m association schemes on disjoint sets
%$X_1,\ldots,X_m$ , together with some partitions of all their products $X_i \times X_j$ for $i\neq j$. The association schemes form homogeneous components of the configuration.

A general question here is formulated as follows: what can be said about the coherent configuration if its homogeneous components are known. 
%For example, it is a well-known fact that the coherent configuration corresponds to a system of linked block designs if $|\R_{X_i}|=2$ for $1\leq i\leq m$.
For example, it is a well-known fact that a coherent configuration $\C$ corresponds to a system of linked block designs if $|\R_{X}|=2$ for all $X\in \mathrm{Fib}(\mathcal{C})$.
After the seminal Hanaki-Uno theorem on association schemes of prime order (see \cite{Han_Uno2006} or Theorem \ref{thm:hu}), it seems quite
natural to ask on a possible structure of a coherent configuration each homogeneous component of
which is of prime order.
%In this paper we deal with exactly this case for $m = 2$ under the assumption
%that $|R_{X_i\times X_i}|=|R_{X_j\times X_j}|$  for $1\leq i\neq j\leq m$.
The following is our first main result answering to this question:
\begin{theorem}\label{thm:main1}
Let $X,Y\in \mathrm{Fib}(\mathcal{C})$ such that $|X|=|Y|$ is a prime.
Then $|\mathcal{R}_{X,Y}|\in \{1, |\mathcal{R}_X|\}$.
In particular, if $|\mathcal{R}_{X,Y}|>1$, then
\[|\mathcal{R}_{X,Y}|=|\mathcal{R}_X|=|\mathcal{R}_Y|.\]
\end{theorem}

In order to state our second main theorem we need to recall the following observation.
Let $G$ be a group acting on a finite set $\Omega$.
Then $G$ acts on $\Omega\times \Omega$ componentwise, and
an orbit of $G$ on $\Omega\times \Omega$ is called an \textit{orbital} (or 2-orbit) of $G$.
We denote the set of orbitals of $G$ by $\mathcal{O}_G$.
Then it is well-known that $\mathcal{C}_G=(\Omega,\mathcal{O}_G)$ is a coherent configuration,
and $\mathrm{Fib}(\mathcal{C}_G)$ is the set of orbits of $G$ on $\Omega$.
In this sense, a coherent configuration is a combinatorial object to generalize
the orbitals of a group action.

Now we assume that $\mathcal{C}=(V,\mathcal{R})$ is a coherent configuration with exactly two fibers $X$, $Y$.
Then (\ref{eq:dc1}) proves the equivalence of the first two statements of the following (see \cite{HR} for the remaining):
\begin{enumerate}
\item[(d)] There exists $R\in \mathcal{R}_{X,Y}$ such that $|R|=|X|=|Y|$.
\item[(e)] $1\in \{d_R\mid R\in \mathcal{R}_{X,Y}\}\cap \{d_R\mid R\in \mathcal{R}_{Y,X}\}$.
\item[(f)] $\mathcal{C}$ is isomorphic to $\mathcal{C}_X\bigotimes \mathcal{T}_2$
where $\T_n=\Big(\{1,2,\ldots, n\},\,\big\{\{(i,j)\}\mid 1\leq i,j\leq n\big\}\Big)$
(see Section~2 for the definition of isomorphism and $\bigotimes$).
\end{enumerate}
We notice the following:\\
(d) is a combinatorial analogy to (c), and such $R$ is a matching between $X$ and $Y$;
(e) is a simple arithmetic condition on intersection numbers;
(f) implies that $\mathcal{C}_X$ and $\mathcal{C}_Y$ are isomorphic, and $\mathcal{C}$ is uniquely determined by $\mathcal{C}_X$.

In this paper we aim to obtain the analogous conclusion (d)--(f) to (a)--(c).
The following is our second main result
to generalize the fact as in the first paragraph under
certain arithmetic conditions on intersection numbers:

\begin{theorem}\label{thm:main2}
Suppose that $\mathcal{C}=(V,\mathcal{R})$ is a coherent configuration with exactly two fibers $X$, $Y$ satisfying
\begin{equation}\label{eq:003}
\mbox{$|X|=|Y|$ is a prime, $|\mathcal{R}_{X,X}|>2$ and $|\mathcal{R}_{X,Y}|>1$.}
\end{equation}
Then there exists $R\in \mathcal{R}_{X,Y}$ such that $|R|=|X|=|Y|$
if one of the following conditions holds with $k=\dfrac{|X|-1}{|\mathcal{R}_{X,X}|-1}$:
\begin{enumerate}
\item[{\rm(i)}] $|\mathcal{R}_{X,X}|>k^2(k+e-2)$ where $e$ is the number of prime divisors of $k$;
\item[{\rm(ii)}] $k\in\{q,2q,3q\}$ for some prime power $q$;
\item[{\rm(iii)}] $k=4q$ for some prime power $q$ with $3\nmid q+1$.
\end{enumerate}
\end{theorem}

Let us show the reason why we exclude the case of $|\mathcal{R}_{X,X}|=2$.
Each symmetric design induces the coherent configuration with exactly two fibers
and eight relations (see \cite{Higman-SRD95} or \cite[Example~1.3]{HR}), and if the design is a non-trivial
one on a prime number of points, like the Fano plane,
then the induced coherent configuration does not satisfy (d)--(f).

Of course, if $|\mathcal{R}_{X,Y}|=1$, then none of (d)--(f) hold,
while $\mathcal{C}$ is the direct sum of $\mathcal{C}_X$ and $\mathcal{C}_Y$
(see \cite{HR} for the definition of direct sum).

%Applying Theorem~\ref{thm:main2} with additional lemmas (see Section 4), we have
%\begin{theorem}
%Suppose that $\mathcal{C}=(V,\mathcal{R})$ is a coherent configuration with exactly two fibers $X$, $Y$ satisfying
%\begin{equation}
%\mbox{$|X|=|Y|$ is a prime less than 100, $|\mathcal{R}_{X,X}|>2$ and $|\mathcal{R}_{X,Y}|>1$.}
%\end{equation}
% except for the case $(|X|,|\mathcal{R}_{X,X}|)=(71,3)$.
%\end{theorem}

\begin{remark}
Applying Theorem~\ref{thm:main2} for $\mathcal{C}_{X\cup Y}$ with $|X|<100$
we obtain the same conclusion as Theorem \ref{thm:main2} except for the case $(|X|,k)=(71,35)$ (see Section \ref{sec:ad} for the details).
\end{remark}

Suppose that
\begin{equation}\label{eq11}
\mbox{$(|X|,k)=(71,35)$ and $1\notin \{d_R\mid R\in \mathcal{R}_{X,Y}\}$.}
\end{equation}
Then by Theorem \ref{thm:main1}, $|\mathcal{R}_{X,Y}|=3$. The three elements of $\mathcal{R}_{X,Y}$ must
form three symmetric designs whose parameters $(v,k,\lambda)$
are $(71,35,17)$, $(71,21,6)$ and $(71,15,3)$, respectively.
Though each of such symmetric designs exists (see \cite{Ademaj-71-21-6}, \cite{Beker-Haemers71-15-3}, \cite{Crnk71-35-17}, \cite{Haemers71-15-3} and \cite{Janko-71-21-6} or \cite[II.6.24,VI.16.30]{Han-Com-De}),
it does not guarantee the existence of a coherent configuration satisfying (\ref{eq11}).

In \cite{Higman-SRD95}, Higman gave a result to eliminate the case of $(|X|,k)=(71,35)$
as in the previous paragraph.
But, the proof given in \cite[(3.2)]{Higman-SRD95} contains a serious gap, so the result may not be recognized to be true,
while we have not found any counterexample. We would be able to disprove \cite[(3.2)]{Higman-SRD95}
if there exists a coherent configuration satisfying (\ref{eq11}).

%In order to prove Theorem~\ref{thm:main1} Theorem~\ref{thm:main2} we need some equations on intersection numbers.
In Section~\ref{sec:pre} we prepare several basic results on intersection numbers
and introduce the concepts of complex products and equitable partitions.
In Section~\ref{sec:proof1} we give a proof of Theorem~\ref{thm:main1}.
In Section~\ref{sec:proof2}
we give a proof of Theorem~\ref{thm:main2}.
We add Section~\ref{sec:ad} for the elimination of coherent configurations on at most 200 points satisfying (\ref{eq:003}).

\section{Preliminaries}\label{sec:pre}
Throughout this section we assume that $\mathcal{C}=(V,\mathcal{R})$ is a coherent configuration.

Let $\mathcal{C}_i=(V_i,\mathcal{R}_i)$  be a coherent configurations, $i=1,2$.

An \textit{isomorphism} from $\C_1$ to $\C_2$ is defined to be
a bijection $\psi:V_1\cup\R_1\To V_2\cup\R_2$ such that for all $u,v\in V_1$ and $R\in\R_1$,
\[(u,v)\in R\Longleftrightarrow\big(\psi(u),\psi(v)\big)\in\psi(R).\]
We say that $\C_1$ is \textit{   isomorphic} to $\C_2$ and denote it by $\C_1\simeq \C_2$ if there exists an isomorphism from $\C_1$ to $\C_2$.

We set
\[\R_1\otimes\R_2=\Big\{R_1\otimes R_2\mid R_1\in\R_1,\ R_2\in\R_2\Big\},\]
where
\[R_1\otimes R_2=\Big\{\big((u_1,u_2),(v_1,v_2)\big)\mid (u_1,v_1)\in
R_1,\ (u_2,v_2)\in R_2\Big\}.\]
Then $\big(V_1\times V_2,\R_1\otimes\R_2\big)$ is a coherent configuration called the {\it tensor product} of $\C_1$ and $\C_2$ and
denoted by $\C_1\bigotimes\C_2$.

Following \cite{Zieschang2005} we define the \textit{complex product} on the power set of $\mathcal{R}$.
For all subsets $\mathcal{S}$ and $\mathcal{T}$ of $\mathcal{R}$
we define the complex product $\mathcal{S}\mathcal{T}$ of $\mathcal{S}$ and $\mathcal{T}$
to be the subset
\[\Big\{R\in \mathcal{R}\mid\exists (S,T)\in \mathcal{S}\times\mathcal{T}; c_{ST}^R>0 \Big\}.\]
The complex product is an associative binary operation on the power set of $\mathcal{R}$
where the proof is parallel to that for association schemes (see \cite{Zieschang2005}).
For convenience we shall write $\mathcal{S}\{T\}$, $\{S\}\mathcal{T}$ and $\{S\}\{T\}$  as
$\mathcal{S}T$, $S\mathcal{T}$ and $ST$, respectively.

In this paper we need intersection numbers $c_{RS}^T$ for
$R\in \mathcal{R}_{X,Y}$, $S\in \mathcal{R}_{Y,Z}$
and $T\in \mathcal{R}_{X,Z}$ under the assumption $|X|=|Y|=|Z|$.
The following is a collection of simplified equations on such intersection numbers
(see \cite{mp} or \cite[Lemma~2.2]{HR} for general formed equations
\footnote{
We missed to assume that
all fibers of $\mathcal{C}$ have the same size at Lemma~2.2 in \cite{HR}
where the lemma is used only for such coherent configurations in \cite{HR}. }).
For $\mathcal{U}\subseteq \mathcal{R}$ we shall write $d_{\mathcal{U}}$ instead of
$\sum_{U\in \mathcal{U}}d_U$.

\begin{lemma}\label{lem:int}
For all $X$, $Y$, $Z\in \mathrm{Fib}(\mathcal{C})$ with $|X|=|Y|=|Z|$ and
all $R\in \mathcal{R}_{X,Y}$, $S\in \mathcal{R}_{Y,Z}$
and $T\in \mathcal{R}_{X,Z}$
we have the following:
\begin{enumerate}
\item $d_Rd_S=\displaystyle\sum_{T\in\R_{X,Z}}c_{RS}^{T}d_T$;
\item $\mbox{$c_{RS}^Td_T=c_{TS^t}^Rd_R=c_{R^t T}^Sd_S$~and~${\rm lcm}(d_R,d_S)\mid c_{RS}^Td_T$}$;
\item $\mbox{ $|\{U\in \mathcal{R}\mid c_{RS}^U>0\}|\leq {\rm gcd}(d_R,d_S)$, i.e., $|RS|\leq {\rm gcd}(d_R,d_S)$;}$
\item $|X|=d_{\mathcal{R}_{X,X}}=d_{\mathcal{R}_{X,Y}}$.
%\sum_{R\in \mathcal{R}_{X,X}}d_R=\sum_{S\in \mathcal{R}_{X,Y}}d_S$.
%\item For all $\mathcal{S}\subseteq \mathcal{R}_{X,Y}$ and $\mathcal{T}\subseteq \mathcal{R}_{Y,Z}$
%we have $d_{\mathcal{ST}}\leq d_{\mathcal{S}}d_{\mathcal{T}}$.
\end{enumerate}
\end{lemma}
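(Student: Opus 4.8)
The plan is to reduce every assertion to double counting of $2$-paths and triangles inside the relations, using throughout the \emph{valency interpretation} of $d_R$. Indeed, for $R\in\mathcal{R}_{X,Y}$ and any $u\in X$ we have $d_R=c_{RR^t}^{\Delta_X}=|R(u)\cap (R^t)^t(u)|=|R(u)|$, since $(R^t)^t=R$; in particular $|R(u)|$ is independent of the choice of $u\in X$. This observation, together with the counting equation (\ref{eq:dc1}) already recorded, is all the machinery needed. I would establish (iv), (i) and (ii) by independent counting arguments and then derive (iii) from (i) and (ii).

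First I would dispose of the elementary item (iv). Fixing $u\in X$, each $z\in X$ satisfies $(u,z)\in R$ for a unique $R$, which lies in $\mathcal{R}_{X,X}$; hence $\{R(u)\mid R\in\mathcal{R}_{X,X}\}$ partitions $X$ and $|X|=\sum_{R\in\mathcal{R}_{X,X}}|R(u)|=\sum_{R\in\mathcal{R}_{X,X}}d_R$. The same argument with $Y$ in place of the second copy of $X$ shows $\{S(u)\mid S\in\mathcal{R}_{X,Y}\}$ partitions $Y$, so $|Y|=\sum_{S\in\mathcal{R}_{X,Y}}d_S$, and $|Y|=|X|$ gives (iv). For (i), I fix $u\in X$ and count the set of $2$-paths $(w,z)$ with $(u,w)\in R$ and $(w,z)\in S$. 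Summing over $w\in R(u)$ first, and then over $z\in S(w)$, yields $d_Rd_S$ such paths. Grouping instead by the relation $T\in\mathcal{R}_{X,Z}$ containing $(u,z)$: for a fixed such $z$ the number of admissible $w$ is $|R(u)\cap S^t(z)|=c_{RS}^{T}$, while the number of $z$ with $(u,z)\in T$ is $d_T$, so the total is $\sum_{T\in\mathcal{R}_{X,Z}}c_{RS}^{T}d_T$. Equating the two counts gives (i).

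For (ii) I count the number $N$ of triangles $(u,w,z)\in X\times Y\times Z$ with $(u,w)\in R$, $(w,z)\in S$ and $(u,z)\in T$, organized according to which edge is fixed first. Fixing the $T$-edge $(u,z)$ (there are $|T|=d_T|X|$ of them) and filling in $w\in R(u)\cap S^t(z)$ gives $N=c_{RS}^{T}d_T|X|$; fixing the $R$-edge $(u,w)$ and filling in $z\in T(u)\cap S(w)=T(u)\cap (S^t)^t(w)$ gives $N=c_{TS^t}^{R}d_R|X|$; fixing the $S$-edge $(w,z)$ and filling in $u\in R^t(w)\cap T^t(z)$ gives $N=c_{R^tT}^{S}d_S|X|$ (here I use $|Y|=|X|$). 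Dividing by $|X|$ produces the three equalities of (ii). The divisibility is then immediate: $c_{RS}^{T}d_T$ equals both $c_{TS^t}^{R}d_R$ and $c_{R^tT}^{S}d_S$, so it is divisible by $d_R$ and by $d_S$, hence by ${\rm lcm}(d_R,d_S)$.

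Finally (iii) is where (i) and (ii) combine. For every $U\in\mathcal{R}_{X,Z}$ with $c_{RS}^{U}>0$, part (ii) gives ${\rm lcm}(d_R,d_S)\mid c_{RS}^{U}d_U$ with $c_{RS}^{U}d_U>0$, whence
\[
c_{RS}^{U}d_U\ \geq\ {\rm lcm}(d_R,d_S)\ =\ \frac{d_Rd_S}{{\rm gcd}(d_R,d_S)}.
\]
Summing these $|RS|$ positive terms and invoking (i),
\[
d_Rd_S\ =\ \sum_{U\in RS}c_{RS}^{U}d_U\ \geq\ |RS|\cdot\frac{d_Rd_S}{{\rm gcd}(d_R,d_S)},
\]
and dividing by $d_Rd_S$ yields $|RS|\leq{\rm gcd}(d_R,d_S)$. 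The only genuinely delicate points I expect are keeping the transposed intersection numbers straight in the three edge-fixing counts of (ii) (one must consistently translate ``$(w,z)\in S$'' into $w\in S^t(z)$ and ``$(u,w)\in R$'' into $u\in R^t(w)$), and recognizing in (iii) that (i) expresses $d_Rd_S={\rm gcd}(d_R,d_S)\cdot{\rm lcm}(d_R,d_S)$ as a sum of exactly $|RS|$ positive multiples of ${\rm lcm}(d_R,d_S)$; the bound $\gcd$ rather than the weaker $\min$ is precisely what this sharpened counting buys.
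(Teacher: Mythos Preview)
Your proof is correct and follows the standard double-counting route for these identities; the paper itself does not supply a proof but simply cites \cite{mp} and \cite[Lemma~2.2]{HR} for the general case, so your argument is more than the paper provides. The only minor stylistic remark is that the equal-size hypothesis $|X|=|Y|=|Z|$ is used exactly where you flag it (in the $S$-edge count for (ii), and implicitly in (iv)), which matches the footnote in the paper explaining why the assumption is needed.
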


The following lemmata were proved in \cite[Lemma~2.3, Lemma~2.2]{pc}{\footnote{Though it is a statement for association schemes,
a parallel way to the proof can be applied for balanced coherent configurations. }:

\begin{lemma}\label{lem:crst}
For all $S,T\in \mathcal{R}_{X,Y}$ with $|X|=|Y|$, we have
\[\mbox{
$SS^t\cap TT^t\subseteq \{\Delta_X\}$ if and only if $ c_{S^t T}^R\leq 1$ for each $R\in \mathcal{R}$.} \]
\end{lemma}

\begin{lemma}\label{lem:psd}
Let $Z\in \mathrm{Fib}(\mathcal{C})$ such that $|Z|$ is a prime. Then for  each $R\in \mathcal{R}_Z\setminus\{\Delta_Z\}$ we have:
\begin{enumerate}
\item  $d_R=k$
where $k=\dfrac{|Z|-1}{|\mathcal{R}_Z|-1}$;
\item $\sum_{S\in \mathcal{R}_Z}c_{SS^t}^R=k-1$.
\end{enumerate}
\end{lemma}

%From now on we will mention association schemes of prime order and equitable partitions.

According to \cite{martin} or \cite{hkk} we define an equitable partition of a homogeneous component.
\begin{definition}
Let $X\in \mathrm{Fib}(\mathcal{C})$ and $\Pi=\{C_1,C_2,\ldots, C_m\}$ be
a partition of $X$, i.e.,
\[X=\bigcup_{i=1}^m C_i,\:\: \mbox{$C_i\cap C_j\neq\emptyset$ if $i\ne  j$, and $C_i \neq \emptyset$ for each $i=1,2,\ldots, m$.}\]
An element of $\Pi$ is called a \textit{cell}.
We say that $\Pi$ is an \textit{equitable partition} of $\mathcal{C}_X$
if, for all $i,j=1,2,\ldots, m$ and each $R\in \mathcal{R}_X$, $|R(x)\cap C_j|$ is constant whenever $x\in C_i$.
\end{definition}
For example, $\{X\}$ and $\{\{x\}\mid x\in X\}$ are equitable partitions of $\mathcal{C}_X$.

For each $Y\in \mathrm{Fib}(\mathcal{C})$ and each $y\in Y$ we define
\begin{equation}\label{eq:piy}
  \Pi_y:=\big\{T(y)\mid T\in \mathcal{R}_{Y,X}\big\}.
\end{equation}

Then $\Pi_y$ is an equitable partition of $\mathcal{C}_X$, since
\[\mbox{$|R(x)\cap S(y)|=c_{RS^t}^T$ whenever $x\in T(y)$.}\]

\section{Proof of Theorem \ref{thm:main1}}\label{sec:proof1}

In \cite{Han_Uno2006} Hanaki and Uno proved the following brilliant theorem:
\begin{theorem}\label{thm:hu}
All non-principal irreducible characters of an association scheme of
prime order are algebraic conjugate and of degree one.
\end{theorem}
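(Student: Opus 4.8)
The plan is to work with the adjacency algebra $\mathcal{A}=\mathbb{C}\mathcal{R}_X$ of the homogeneous component together with its standard module $\mathbb{C}X$. This algebra is semisimple and $\mathbb{C}X$ is a faithful module of dimension $|X|=p$, so, writing $\chi_0$ for the principal character (the one with $A_R\mapsto d_R$) and $\chi_1,\dots,\chi_d$ for the remaining irreducible characters, and $m_i$ for the multiplicity of $\chi_i$ in $\mathbb{C}X$, one has the standard identities $m_0=\chi_0(1)=1$, $\sum_i m_i\chi_i(1)=p$ and $\sum_i\chi_i(1)^2=|\mathcal{R}_X|$, together with the known fact that the multiplicity of each irreducible character in the standard module of an association scheme is at least its degree, i.e.\ $m_i\geq\chi_i(1)$. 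Combining the first two gives $p-1=\sum_{i\geq1}m_i\chi_i(1)\geq\sum_{i\geq1}\chi_i(1)^2$, hence $\chi_i(1)\leq\sqrt{p-1}$ for every $i$ and $|\mathcal{R}_X|\leq p$; this constrains the picture but does not yet collapse the degrees to $1$.

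To force $\chi_i(1)=1$ for all $i$ I would bring the prime $p$ in through reduction modulo $p$. Since $\sum_{R\in\mathcal{R}_X}A_R=J$ and $J^2=pJ$, the all-ones matrix becomes central and nilpotent over $\mathbb{F}_p$, so it lies in the Jacobson radical of $\mathbb{F}_p\mathcal{R}_X$; concretely, in $\mathbb{F}_p^X$ the all-ones vector spans a one-dimensional submodule on which each $A_R$ acts by $d_R\bmod p$, and the reduction modulo $p$ of the sum-zero sublattice of $\mathbb{Z}^X$ is a submodule of $\mathbb{F}_p^X$ containing that line. Thus the mod-$p$ reduction of the standard module carries this one-dimensional ``valency module'' with multiplicity at least two, whereas over $\mathbb{C}$ the principal character occurs only once. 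Feeding this discrepancy into the decomposition matrix of $\mathcal{A}$ forces some non-principal $\chi_i$ to have a one-dimensional Brauer constituent, and pushing this analysis — together with the dimension count and $m_i\geq\chi_i(1)$ — should show that $\mathbb{F}_p\mathcal{R}_X$ modulo its radical is commutative, and then that $\mathcal{A}$ itself is commutative, i.e.\ $\chi_i(1)=1$ for all $i$.

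Once $\mathcal{A}$ is commutative, the irreducible characters correspond to the primitive idempotents $E_0=\tfrac1p J,E_1,\dots,E_d$, and I would finish by Galois theory. The absolute Galois group of $\mathbb{Q}$ permutes $\{E_i\}$, fixes the rational idempotent $E_0$, and acts trivially on the $0$--$1$ matrices $A_R$; within one Galois orbit the multiplicities $m_i=\operatorname{rank}E_i$ agree, so if the non-principal idempotents split into orbits of sizes $f_1,\dots,f_t$ with common multiplicities $\mu_1,\dots,\mu_t$ then $\sum_j f_j\mu_j=p-1$. To see that $t=1$ one uses that $\mathbb{Q}\mathcal{R}_X\cong\mathbb{Q}\times\prod_j K_j$ with $[K_j:\mathbb{Q}]=f_j$ acting on the sum-zero lattice of rank $p-1$, and that the single prime $p$ entering through $J=pE_0$ — by a Frame-number / discriminant computation, or again through the characteristic-$p$ structure in which $J$ spans part of the radical — cannot be shared among several non-principal blocks. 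Then all non-principal characters are Galois conjugate of degree one, necessarily with common multiplicity $(p-1)/d=k$. The genuinely hard part is exactly this prime-specific input — ruling out several Galois orbits, and just before it the upgrade from the mod-$p$ multiplicity anomaly to full commutativity; both steps must use that $p$ is prime rather than merely the counting identities, and I expect the modular representation theory of the adjacency algebra in characteristic equal to $|X|$ to be the decisive tool.
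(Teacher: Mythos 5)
First, a point of reference: the paper does not prove Theorem~\ref{thm:hu} at all --- it is quoted from Hanaki and Uno \cite{Han_Uno2006} and used as a black box --- so there is no internal proof to compare against, and the question is only whether your sketch would stand on its own. Your opening observations are all correct and are indeed the right starting data: $m_0=1$, $\sum_i m_i\chi_i(1)=p$, $\sum_i\chi_i(1)^2=|\mathcal{R}_X|$, the Higman inequality $m_i\ge\chi_i(1)$, hence $|\mathcal{R}_X|\le p$; likewise the mod-$p$ observations are sound: $J^2=pJ$ makes $\bar{J}$ a nonzero central nilpotent, the valency representation occurs at least twice as a constituent of $\mathbb{F}_p^X$ (on $\langle\mathbf{1}\rangle$ and on $\mathbb{F}_p^X$ modulo the sum-zero subspace), and this does force some non-principal ordinary character to share a Brauer constituent with the principal one. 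This is genuinely the circle of ideas (modular theory at the prime $p=|X|$, the standard module, Frame-number/valuation bookkeeping, Galois action) that Hanaki and Uno exploit.

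However, the two assertions that constitute the theorem are not actually proved. For ``degree one'': from ``some non-principal $\chi_i$ has a one-dimensional Brauer constituent'' you pass to ``$\mathbb{F}_p\mathcal{R}_X$ modulo its radical is commutative'' with no argument, and the further step ``and then $\mathcal{A}$ itself is commutative'' is invalid as a general implication --- for the group algebra of a nonabelian group of order $p^3$ the mod-$p$ algebra is local, hence commutative modulo its radical, while the complex group algebra has irreducible characters of degree $p$. So commutativity of the semisimple quotient in characteristic $p$ cannot by itself lift to characteristic $0$; the lifting must use the primality of $|X|$ through some quantitative input (in \cite{Han_Uno2006} this is a $p$-adic valuation count on the multiplicities combined with $m_i\ge\chi_i(1)$), and that input is missing here. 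For ``single Galois orbit'': you reduce everything to the claim that the prime $p$ ``cannot be shared among several non-principal blocks'' via an unspecified Frame-number or discriminant computation; this is precisely the hard step, it is not carried out, and you flag it yourself with ``should show'' and ``I expect''. As written the proposal is a reasonable plan of attack that identifies the correct tools, but both decisive steps are asserted rather than proved, and one bridging claim is false as stated.
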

The following proposition is obtained as a consequence of the previous theorem:
\begin{proposition}\label{prop:alg}
Let $\mathcal{C}=(V,\mathcal{R})$ be an association scheme of prime order
and $\Pi$ be an equitable partition of $\mathcal{C}$.
Then $|\Pi|\equiv 1\mod{|\mathcal{R}|-1}$.
\end{proposition}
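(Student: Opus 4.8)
The plan is to exploit the fact that an equitable partition $\Pi=\{C_1,\ldots,C_m\}$ of $\mathcal{C}_X$ gives rise to a quotient matrix acting on the space of $\Pi$-invariant vectors, and that this quotient matrix is built from the adjacency algebra of $\mathcal{C}_X$, so its eigenvalues are among those of $\mathcal{C}_X$. Concretely, for each $R\in\mathcal{R}_X$ let $A_R$ be the adjacency matrix of $R$, and let $B_R$ be the $m\times m$ matrix whose $(i,j)$ entry is the constant value $|R(x)\cap C_j|$ for $x\in C_i$; equitability is exactly the statement that this is well defined. If $P$ is the $|X|\times m$ characteristic matrix of $\Pi$ (so $P_{x,j}=1$ iff $x\in C_j$), then $A_R P = P B_R$, and hence every eigenvalue of $B_R$ is an eigenvalue of $A_R$, with the all-ones vector $\mathbf{1}_m$ being the Perron eigenvector affording the valency $d_R$.

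Next I would invoke Theorem~\ref{thm:hu}: since $|X|$ is prime, the non-principal irreducible characters of $\mathcal{C}_X$ are all algebraically conjugate of degree one, so there are exactly $|\mathcal{R}_X|-1$ of them, forming a single Galois orbit over $\mathbb{Q}$. For a degree-one character $\chi$, the eigenvalue of $A_R$ on the corresponding common eigenspace is the algebraic integer $\chi(R)$ (suitably normalized), and these eigenvalues for the non-principal characters are Galois conjugate. Therefore the spectrum of each $A_R$ consists of the Perron value $d_R$ (multiplicity $1$) together with a multiset of algebraic conjugates; in particular any eigenvalue of $A_R$ other than $d_R$ that is rational must be an integer all of whose Galois conjugates are also eigenvalues. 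The key point is to consider the matrix $B:=B_S$ for a suitable combination — most cleanly, sum over all $R\in\mathcal{R}_X$, so that $\sum_R A_R = J$ (the all-ones matrix on $X$) gives $\sum_R B_R = J_{m}\cdot(|X|/m)$... but $|X|/m$ need not be an integer, so instead I would track a single $B_R$ and use a counting/trace argument: the trace of $B_R$ is an integer, it equals $d_R$ plus the sum of the remaining eigenvalues, and those remaining eigenvalues are a union of full Galois orbits of the non-principal eigenvalues of $A_R$. Counting multiplicities, the number $m-1$ of non-Perron eigenvalues of $B_R$ is a sum of sizes of these Galois orbits; since all non-principal characters of $\mathcal{C}_X$ lie in one Galois orbit of size $|\mathcal{R}_X|-1$, each such orbit appearing in $B_R$ has size exactly $|\mathcal{R}_X|-1$ (or the eigenvalue is $d_R$ itself, i.e.\ rational fixed), forcing $m-1\equiv 0\pmod{|\mathcal{R}_X|-1}$ once one checks the rational eigenvalue can only be $d_R$ with multiplicity matching the principal part.

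The main obstacle I anticipate is making the last step rigorous: a priori $B_R$ could have a rational eigenvalue equal to $d_R$ with multiplicity greater than one, or equal to some other rational number lying in the Galois orbit (which, since the orbit is a full set of conjugates of an algebraic integer, can happen only if that integer is rational, i.e.\ the orbit is a singleton — contradicting $|\mathcal{R}_X|>2$ unless the eigenvalue coincides with $d_R$). To handle multiplicity of $d_R$, I would argue that the $d_R$-eigenspace of $B_R$ corresponds (via $P$) to a subspace of the $d_R$-eigenspace of $A_R$ on which $\mathcal{C}_X$ acts, and the common $d_R$-eigenspace across \emph{all} $R$ simultaneously is one-dimensional (spanned by $\mathbf{1}_X$), because it is the principal module; but a single $B_R$ need not see this, so the clean route is to work with the whole algebra at once. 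Thus the real proof should be: $P$ intertwines the adjacency algebra $\mathbb{C}[\mathcal{C}_X]$ with the $m\times m$ algebra it generates, the image is a quotient module decomposing into the principal character (once) plus some multiples of the single non-principal Galois orbit of characters; each occurrence of that orbit contributes dimension $|\mathcal{R}_X|-1$ since every degree-one character in it must appear with the same multiplicity (they are conjugate over $\mathbb{Q}$ and $P$ is rational), giving $\dim = m = 1 + j(|\mathcal{R}_X|-1)$ for some nonnegative integer $j$, which is the claim. I expect the bulk of the writing to be in justifying that the multiplicities of the conjugate characters in the $\Pi$-module are equal, which follows from the Galois action on the rational representation afforded by the rational matrix $P$.
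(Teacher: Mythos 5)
Your final argument is correct and is essentially the paper's own proof: the span $W$ of the characteristic vectors of the cells (the column space of $P$) is a module over the adjacency algebra, it contains the principal module exactly once, and since its character is integer-valued while the non-principal irreducible characters form a single Galois orbit of size $|\mathcal{R}_X|-1$ (Theorem~\ref{thm:hu}), those characters all occur with equal multiplicity $j$, giving $|\Pi|=\dim W=1+j(|\mathcal{R}_X|-1)$. The single-matrix trace detour you consider first is unnecessary, but you correctly abandon it for the module decomposition that the paper uses.
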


\begin{proof}
Let $\mathcal{A}$ denote the adjacency algebra of $\mathcal{C}$ over $\mathbb{C}$.
Then the subspace $W$ spanned by the characteristic vectors of the cells in $\Pi$ is a left $\mathcal{A}$-module
with respect to the ordinary matrix product.
Since $\mathcal{A}$ is semi-simple, $W$ is a direct sum of irreducible submodules.

Note that the subspace spanned by the all-one vector is an $\mathcal{A}$-submodule of $W$
affording the principal character, and its multiplicity is one.

Since the character afforded by $W$ is integral valued, it is left invariant from
any algebraic conjugate action.
It follows from Theorem~\ref{thm:hu} that
all non-principal irreducible submodules of $W$ have the same multiplicity, say $m$.
Since
\[\mbox{$\mathrm{dim}_{\mathbb{C}}(W)=|\Pi|$ and $\mathrm{dim}_{\mathbb{C}}(\mathcal{A})=|\mathcal{R}|$,}\]
it follows that
\[ |\Pi|=1+m(|\mathcal{R}|-1).\]
\end{proof}

\begin{proof}[\textbf{Proof of Theorem~\ref{thm:main1}}]
Let $\mathcal{C}=(V,\mathcal{R})$ be a coherent configuration with
$X$, $Y\in \mathrm{Fib}(\mathcal{C})$ such that $|X|=|Y|$ is a prime. Recall that $\Pi_y$ is an equitable partition of $\mathcal{C}_X$ where $y\in Y$.
By \eqref{eq:piy}, $|\Pi_y|=|\mathcal{R}_{X,Y}|$. Then it follows from Proposition~\ref{prop:alg}
that
\[|\mathcal{R}_{X,Y}|\equiv 1\mod{|\mathcal{R}_X|-1}.\]
Since $|\mathcal{R}_{X,Y}|\leq |\mathcal{R}_X|$
(see \cite[p.223]{Higman1987} or \cite[Proposition~2.7]{HR}), $|\mathcal{R}_{X,Y}|\in \{1, |\mathcal{R}_X|\}$. Applying the first statement for $\mathcal{C}_Y$ with $|\mathcal{R}_{X,Y}|\leq |\mathcal{R}_Y|$, we obtain the second statement.

\end{proof}

\section{Proof of Theorem \ref{thm:main2}}\label{sec:proof2}

For the remainder of this paper we assume that
$\mathcal{C}=(V,\mathcal{R})$ is a coherent configuration with
$X$, $Y\in \mathrm{Fib}(\mathcal{C})$ such that
\[\mbox{$m=|X|=|Y|$ is a prime, $r=|\mathcal{R}_{X}|>2$ and $|\mathcal{R}_{X,Y}|>1$.}\]
By Theorem \ref{thm:main1}, we have
\[r=|\mathcal{R}_X|=|\mathcal{R}_{X,Y}|=|\mathcal{R}_Y|.\]
For the remainder of this paper we set
\[k=\frac{m-1}{r-1}.\]
By Lemma~\ref{lem:psd}(i)   the multi-set $(d_R\mid R\in \mathcal{R}_{Z})$ with $Z\in \{X,Y\}$
coincides with $(1,k,\ldots, k)$ by a suitable ordering. In this section we aim to show that $1\in\{d_R\mid R\in \mathcal{R}_{X,Y}\}$, which implies that
the multi-set $(d_R\mid R\in \mathcal{R}_{X,Y})$ coincides with $(1,k,\ldots, k)$ by
a suitable ordering, since the complex product $SR$ is a singleton with $d_{SR}=d_S$ whenever
$S\in \mathcal{R}_X$ and $d_R=1$ by Lemma~\ref{lem:int}(iii).

\begin{lemma}\label{lem:ba}
For all $S,T\in \mathcal{R}_{X,Y}$ with $S\ne T$ we have the following:
\begin{enumerate}
\item[{\rm(i)}] $d_Sd_S\equiv d_S\mod{k}$;
\item[{\rm(ii)}] $d_Sd_T\equiv 0\mod{k}$.
\end{enumerate}
\end{lemma}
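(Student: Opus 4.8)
The plan is to exploit Lemma~\ref{lem:int}(i) applied to the composition $\mathcal{R}_{X,Y}\cdot\mathcal{R}_{Y,X}\subseteq\mathcal{R}_{X,X}$, together with the known shape of the multi-set $(d_R\mid R\in\mathcal{R}_X)$, which by Lemma~\ref{lem:psd}(i) is $(1,k,\ldots,k)$. First I would fix $S\in\mathcal{R}_{X,Y}$ and expand the product $d_S d_{S^t}$ via Lemma~\ref{lem:int}(i): $d_S d_{S^t}=\sum_{U\in\mathcal{R}_{X,X}}c_{SS^t}^{U}d_U$. The term $U=\Delta_X$ contributes $c_{SS^t}^{\Delta_X}d_{\Delta_X}=d_S\cdot 1=d_S$ (recall $d_S=c_{SS^t}^{\Delta_X}$ by definition), and every other term has $d_U=k$, so modulo $k$ we get $d_S d_{S^t}\equiv d_S\pmod{k}$. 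Similarly, for $S\ne T$ in $\mathcal{R}_{X,Y}$, expanding $d_S d_{T^t}=\sum_{U\in\mathcal{R}_{X,X}}c_{ST^t}^{U}d_U$ and noting that $\Delta_X\notin ST^t$ since $S\ne T$ (so the coefficient of $\Delta_X$ vanishes), all surviving terms carry a factor $k$, giving $d_S d_{T^t}\equiv 0\pmod{k}$.

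The remaining point is to replace $d_{S^t}$ and $d_{T^t}$ by $d_S$ and $d_T$ respectively. Since $|X|=|Y|$, equation~(\ref{eq:dc1}) gives $|S|=d_S|X|=d_{S^t}|Y|$, hence $d_S=d_{S^t}$ for every $S\in\mathcal{R}_{X,Y}$, and likewise $d_T=d_{T^t}$. Substituting these identities into the two congruences above yields exactly $d_S d_S\equiv d_S\pmod k$ and $d_S d_T\equiv 0\pmod k$, which are (i) and (ii).

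I do not expect a serious obstacle here; the only subtle point is the bookkeeping of which relation in $\mathcal{R}_{X,X}$ plays the role of the identity and the observation that $\Delta_X\in SS^t$ always (with multiplicity $c_{SS^t}^{\Delta_X}=d_S$) while $\Delta_X\in ST^t$ fails for $S\neq T$ — this is where the hypothesis $S\neq T$ in part (ii) is used. One should also make sure the sum in Lemma~\ref{lem:int}(i) is genuinely over $\mathcal{R}_{X,Z}$ with $Z=X$, i.e. over $\mathcal{R}_{X,X}$, which is legitimate because $S\in\mathcal{R}_{X,Y}$ and $S^t\in\mathcal{R}_{Y,X}$ (resp. $T^t\in\mathcal{R}_{Y,X}$) and $|X|=|Y|$, so the hypotheses of Lemma~\ref{lem:int} are met.
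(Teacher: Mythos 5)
Your proof is correct and follows essentially the same route as the paper: expand $d_Sd_{T^t}$ (resp.\ $d_Sd_{S^t}$) via Lemma~\ref{lem:int}(i), use Lemma~\ref{lem:psd}(i) to reduce all non-diagonal terms to $0$ modulo $k$, and invoke $d_T=d_{T^t}$ from (\ref{eq:dc1}). The paper's own proof is a one-line sketch of exactly this argument; you have merely supplied the bookkeeping it omits.
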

\begin{proof}
(i) Applying Lemma~\ref{lem:int}(i) for $S$ and $S^t$ with $d_S=d_{S^t}$ and $c_{SS^t}^{\Delta_X}=d_S$, we obtain that
\[
d_Sd_S=d_S+k\sum_{\substack{T\in \mathcal{R}_{X,X}\\T\neq \Delta_X}}c_{SS^t}^{T}.
\]

(ii) Applying Lemma~\ref{lem:int}(i) for $S$ and $T^t$ with $d_T=d_{T^t}$ and $\Delta_X\notin ST^t$, we obtain that
\[
d_Sd_T=k\sum_{T\in \mathcal{R}_{X,X}}c_{ST^t}^{T}.
\]
\end{proof}

We set
\[\mathcal{S}_1:=\{T\in\mathcal{R}_{X,Y}\mid k\nmid d_T\},\:\:\:\
\mathcal{S}_2:=\{T\in\mathcal{R}_{X,Y}\mid d_T=k\}\:\:\mbox{and}\]
\[\mathcal{S}_3:=\{T\in\mathcal{R}_{X,Y}\mid k\mid d_T,\:\: k<d_T\}.\]

\begin{lemma}\label{lem:bound}
Let $k=p_1^{\alpha_1}\cdots p_e^{\alpha_e}$ where $p_i$ are the distinct prime divisors of $k$ and
$\alpha_i$ are positive integers.
Then we have the following:
\begin{enumerate}
\item For each $i=1,\ldots, e$ there exists a unique $S\in \mathcal{R}_{X,Y}$ such that
$p_i\nmid d_S$;
\item $|\mathcal{S}_1|\leq e$;

\item $k|\mathcal{S}_3|+d_{\mathcal{S}_1}\leq 1+k(e-1)$.
\end{enumerate}
\end{lemma}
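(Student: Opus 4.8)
The plan is to work modulo each prime power $p_i^{\alpha_i}$ separately, exploiting the quadratic congruence in Lemma~\ref{lem:ba}(i). Fix $i$. Reading Lemma~\ref{lem:ba}(i) modulo $p_i$, every $S\in\mathcal{R}_{X,Y}$ satisfies $d_S^2\equiv d_S\pmod{p_i}$, so $d_S\equiv 0$ or $1\pmod{p_i}$; and by Lemma~\ref{lem:ba}(ii), if two distinct relations $S,T$ both had $d_S\equiv d_T\equiv 1\pmod{p_i}$ then $d_Sd_T\equiv 1\not\equiv 0\pmod{p_i}$, contradicting $d_Sd_T\equiv 0\pmod k$ (hence $\pmod{p_i}$). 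Therefore there is \emph{at most} one $S$ with $p_i\nmid d_S$. Existence of such an $S$ comes from Lemma~\ref{lem:int}(iv): $\sum_{S\in\mathcal{R}_{X,Y}}d_S=m=1+k(r-1)$, which is $\equiv 1\pmod{p_i}$; if every $d_S$ were divisible by $p_i$ the sum would be $\equiv 0$, a contradiction. This proves (i). Since $\mathcal{S}_1=\{T\mid k\nmid d_T\}=\bigcup_{i=1}^e\{T\mid p_i\nmid d_T\}$ and each set in the union is a singleton, $|\mathcal{S}_1|\le e$, which is (ii).

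For (iii), the idea is to estimate $d_{\mathcal{R}_{X,Y}}=m=1+k(r-1)$ from below using the three-way split $\mathcal{R}_{X,Y}=\mathcal{S}_1\sqcup\mathcal{S}_2\sqcup\mathcal{S}_3$. We have $d_{\mathcal{S}_2}=k|\mathcal{S}_2|$ and $d_{\mathcal{S}_3}\ge 2k|\mathcal{S}_3|$ (since each $d_T\in\mathcal{S}_3$ is a multiple of $k$ strictly bigger than $k$, hence at least $2k$). Also $|\mathcal{S}_1|+|\mathcal{S}_2|+|\mathcal{S}_3|=r$. Combining,
\[
1+k(r-1)=d_{\mathcal{S}_1}+k|\mathcal{S}_2|+d_{\mathcal{S}_3}\ge d_{\mathcal{S}_1}+k\bigl(r-|\mathcal{S}_1|-|\mathcal{S}_3|\bigr)+2k|\mathcal{S}_3|,
\]
which rearranges to $k|\mathcal{S}_3|+d_{\mathcal{S}_1}\le 1+k(|\mathcal{S}_1|-1)$. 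Since $|\mathcal{S}_1|\le e$ by part (ii), and the right-hand side is increasing in $|\mathcal{S}_1|$, we get $k|\mathcal{S}_3|+d_{\mathcal{S}_1}\le 1+k(e-1)$, as claimed.

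The only delicate point is making sure the pigeonhole argument in (i) is airtight: one must confirm that the case $d_S\equiv d_T\equiv 0\pmod{p_i}$ for two distinct relations causes no contradiction (it does not — that is the generic situation), that a single relation $S$ with $d_S\equiv 1\pmod{p_i}$ is consistent with Lemma~\ref{lem:ba}(i) (it is, since $1\cdot 1\equiv 1$), and that $\Delta_X$ plays no special role here because we are inside $\mathcal{R}_{X,Y}$, not $\mathcal{R}_X$. Everything else is bookkeeping with Lemma~\ref{lem:int}(iv) and the definitions of $\mathcal{S}_1,\mathcal{S}_2,\mathcal{S}_3$; no new structural input is needed beyond Lemmas~\ref{lem:int}, \ref{lem:ba} and the prime factorization of $k$.
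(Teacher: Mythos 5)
Your parts (i) and (iii) are correct and essentially the paper's own argument: the same use of $m=1+k(r-1)\equiv 1\pmod{p_i}$ for existence, of $k\mid d_Sd_T$ (Lemma~\ref{lem:ba}(ii)) for uniqueness, and the same count $m\ge d_{\mathcal{S}_1}+k|\mathcal{S}_2|+2k|\mathcal{S}_3|$ rearranged via $k|\mathcal{S}_2|+k|\mathcal{S}_3|=k(r-|\mathcal{S}_1|)$ for (iii).

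In (ii), however, the identity $\mathcal{S}_1=\bigcup_{i=1}^e\{T\mid p_i\nmid d_T\}$ is asserted as if it were definitional, and its forward inclusion is exactly the nontrivial point. From $k\nmid d_T$ you only get $p_i^{\alpha_i}\nmid d_T$ for some $i$, and for non-squarefree $k$ this does not by itself yield $p_i\nmid d_T$: for instance with $k=20$ a hypothetical $d_T=10$ would lie in $\mathcal{S}_1$ but in none of the sets on the right, so the equality is false for arbitrary integers and must be derived from the structure at hand. What rescues it is Lemma~\ref{lem:ba}(i) read modulo $p_i^{\alpha_i}$ (not merely modulo $p_i$, which is all you extract in part (i)): since $k\mid d_T(d_T-1)$ and $d_T$, $d_T-1$ are coprime, each $p_i^{\alpha_i}$ divides $d_T$ or $d_T-1$ in full, so $p_i^{\alpha_i}\nmid d_T$ forces $p_i^{\alpha_i}\mid d_T-1$, i.e.\ $p_i\nmid d_T$. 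This is precisely the ``the map $p_i\mapsto S$ is onto $\mathcal{S}_1$'' step in the paper's proof. You have all the ingredients, so the gap is a one-line fix, but as written the single assertion that carries the content of (ii) is unjustified.
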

\begin{proof}
(i) By Lemma~\ref{lem:int}(iv) and Lemma~\ref{lem:psd}(i),
\[m=1+(r-1)k\equiv 1 \mod{p_i}.\]
Since $m=d_{\mathcal{R}_{X,Y}}$, there exists an $S\in \mathcal{R}_{X,Y}$ such that
$p_i\nmid d_S$. The uniqueness of such $S$ is a direct consequence of Lemma~\ref{lem:ba}(ii).

(ii) The correspondence given in (i) gives a function from $\{p_1,p_2,\ldots, p_e\}$ to $\mathcal{S}_1$.
It remains to show that this function is onto.

Let $S\in\mathcal{S}_1$. By the definition of $\mathcal{S}_1$, there exists $p_i$ such that ${p_i}^{\alpha_i}$ does not divide $d_S$.
By Lemma~\ref{lem:ba}(i),
\[d_Sd_S\equiv d_S \mod{k}.\]
Therefore $d_S(d_S-1)$ is divided by $k$. Since $d_S$ and $d_S-1$ are relatively prime, $p_i^{\alpha_i}\nmid d_S$ implies that $p_i\nmid d_S$.
It follows from (i) that $d_S$ lies in the range of the function.

(iii) Note that $r=|\mathcal{S}_1|+|\mathcal{S}_2|+|\mathcal{S}_3|$ and
\[ m=\sum_{S\in \mathcal{R}_{X,Y}}d_S
=\sum_{i=1}^3d_{\mathcal{S}_i}\geq  d_{\mathcal{S}_1}+k|\mathcal{S}_2|+2k|\mathcal{S}_3|.\]
Since $k|\mathcal{S}_2|+k|\mathcal{S}_3|=k(r-|\mathcal{S}_1|)$ and $m=1+k(r-1)$,
it follows that
\[1+k(|\mathcal{S}_1|-1)\geq d_{\mathcal{S}_1}+k|\mathcal{S}_3|.\]
By (ii), we have
\[1+k(e-1)\geq  d_{\mathcal{S}_1}+k|\mathcal{S}_3|.\]
This completes the proof of (iii).
\end{proof}

\begin{lemma}\label{lem:min}
We have $\max\{d_S\mid S\in \mathcal{R}_{X,Y}\}\leq k \cdot \min\{d_S\mid S\in \mathcal{R}_{X,Y}\}$.
\end{lemma}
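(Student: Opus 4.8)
The plan is to bound the ratio of degrees in $\mathcal{R}_{X,Y}$ using the multiplicative constraints from Lemma~\ref{lem:ba} together with the structure theorem for equitable partitions. Write $d_{\min}=\min\{d_S\mid S\in\mathcal{R}_{X,Y}\}$ and $d_{\max}=\max\{d_S\mid S\in\mathcal{R}_{X,Y}\}$, and let $S_0, S_1$ realize these extrema. If $S_0=S_1$ there is nothing to prove (indeed $r>1$, but the statement is vacuous), so assume $S_0\ne S_1$. The key observation is that Lemma~\ref{lem:ba}(ii) gives $d_{S_0}d_{S_1}\equiv 0 \pmod k$, while Lemma~\ref{lem:ba}(i) gives $d_{S_0}(d_{S_0}-1)\equiv 0\pmod k$ and likewise for $S_1$. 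I would extract from these congruences that $k$ divides $d_{S_0}d_{S_1}$ while $\gcd(d_{S_0},d_{S_0}-1)=1$ forces a clean splitting of the prime-power factors of $k$ between the various relations.

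The main step is to show that $d_{\max}\le k\,d_{\min}$ by analyzing how the prime power $p_i^{\alpha_i}\mid k$ can fail to divide a given $d_S$. By Lemma~\ref{lem:bound}(i), for each prime divisor $p_i$ of $k$ there is a \emph{unique} $S\in\mathcal{R}_{X,Y}$ with $p_i\nmid d_S$; call it $S^{(i)}$. For any $T\in\mathcal{R}_{X,Y}$, if $T\ne S^{(i)}$ then $p_i\mid d_T$, and in fact combining $d_{S^{(i)}}d_T\equiv 0\pmod{k}$ (Lemma~\ref{lem:ba}(ii)) with $p_i\nmid d_{S^{(i)}}$ shows $p_i^{\alpha_i}\mid d_T$. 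So every $d_T$ is divisible by $\prod_{i:\,T\ne S^{(i)}} p_i^{\alpha_i}$, i.e., $d_T$ is divisible by $k/\!\prod_{i:\,T=S^{(i)}}p_i^{\alpha_i}$. In particular, for the unique possible exceptions the "missing part" of $k$ in $d_T$ is at most $\prod_{i}p_i^{\alpha_i}=k$ (and typically much less). For $d_{\min}$, the relation $S_0$ has $d_{S_0}$ divisible by $k/\prod_{i:\,S_0=S^{(i)}}p_i^{\alpha_i}\ge 1$, and for $d_{\max}$ one uses $d_{S_1}\mid$-type or rather $d_{S_1}\equiv 0$ relations together with $d_{S_0}d_{S_1}\equiv 0\pmod k$ to pin down that every prime power of $k$ not dividing $d_{S_0}$ must divide $d_{S_1}$ (to full power), and conversely. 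Thus $\mathrm{lcm}$-type reasoning gives $k \mid d_{S_0}\,d_{S_1}/\gcd$-considerations, and one concludes $d_{S_1}\le k\,d_{S_0}$ by checking that $d_{S_1}/d_{S_0}$, once the common $k$-part is cancelled, is bounded by the part of $k$ that divides $d_{S_1}$ but not $d_{S_0}$, which is at most $k$.

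A cleaner route, which I would try first, is: since $d_{S_0}d_{S_1}\equiv 0\pmod k$, write $k=k_1k_2$ with $k_1\mid d_{S_0}$ and $k_2\mid d_{S_1}$ and $\gcd$-adjustments so that $k\mid k_1 k_2$; then $d_{S_1}\ge k_2$ is not what we want, rather we want an upper bound on $d_{S_1}$. The actual upper bound on $d_{\max}$ comes from Lemma~\ref{lem:int}(iv): $m=\sum_{S\in\mathcal{R}_{X,Y}}d_S=1+k(r-1)$, so $d_{\max}\le m-1 = k(r-1)$, but that is too weak. Instead, the right inequality $d_{S_1}\le k\,d_{S_0}$ should follow because $d_{S_1}$ divides $\mathrm{lcm}$ of things of size comparable to $d_{S_0}$. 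The hard part will be to make precise the claim that the "$k$-deficient part" of $d_{S_0}$ (the factors of $k$ that $d_{S_0}$ is missing) is exactly compensated inside $d_{S_1}$, using the uniqueness in Lemma~\ref{lem:bound}(i): for each prime $p_i$, at most one relation can be missing $p_i$, so $d_{S_0}$ and $d_{S_1}$ cannot both be missing the same $p_i$; hence every prime power $p_i^{\alpha_i}$ of $k$ divides at least one of $d_{S_0}, d_{S_1}$, giving $k\mid \mathrm{lcm}(d_{S_0},d_{S_1})$, and therefore $d_{S_1}\mid \mathrm{lcm}(d_{S_0},d_{S_1})$ with $\mathrm{lcm}(d_{S_0},d_{S_1})\le d_{S_0}d_{S_1}/\gcd(d_{S_0},d_{S_1})$; combining $k\mid\mathrm{lcm}(d_{S_0},d_{S_1})$ with $d_{S_1}\le \mathrm{lcm}(d_{S_0},d_{S_1})$ and $\mathrm{lcm}(d_{S_0},d_{S_1})/d_{S_0}=d_{S_1}/\gcd(d_{S_0},d_{S_1})\le d_{S_1}$ does not immediately close the loop, so the genuinely delicate point is to turn the divisibility $k\mid\mathrm{lcm}(d_{S_0},d_{S_1})$ into $d_{S_1}\le k\,d_{S_0}$, which I expect to need the observation that $d_{S_1}/\gcd(d_{S_0},d_{S_1})$ divides $k$ (because the only primes that can appear in the reduced ratio are those dividing $k$, by Lemma~\ref{lem:ba}), whence $d_{S_1}\le k\gcd(d_{S_0},d_{S_1})\le k\,d_{S_0}$. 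Verifying that every prime dividing $d_{S_1}/\gcd(d_{S_0},d_{S_1})$ divides $k$ is then the crux, and it follows from Lemma~\ref{lem:ba}(ii) applied to the pair $(S_0, S_1)$ together with $d_{S_0}\equiv d_{S_0}^2\pmod k$: any prime $p\nmid k$ dividing $d_{S_1}$ must already divide $d_{S_0}$ to at least the same power, since otherwise $p$ would have to divide $k$ to account for the congruence $d_{S_0}d_{S_1}\equiv 0$.
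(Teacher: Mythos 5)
Your proposal tries to derive the bound purely arithmetically from the congruences of Lemma~\ref{lem:ba} and the uniqueness statement of Lemma~\ref{lem:bound}(i), but the crux step does not hold. You need the claim that $d_{S_1}/\gcd(d_{S_0},d_{S_1})$ divides $k$, and you justify it by asserting that any prime $p\nmid k$ dividing $d_{S_1}$ must divide $d_{S_0}$ to at least the same power ``to account for the congruence $d_{S_0}d_{S_1}\equiv 0\pmod k$.'' That is a non sequitur: a congruence modulo $k$ constrains only the primes dividing $k$, so it says nothing whatsoever about a prime $p\nmid k$ appearing in $d_{S_1}$. For instance, with $k=6$ the values $d_{S_0}=3$, $d_{S_1}=28$ satisfy every congruence in Lemma~\ref{lem:ba} ($3\cdot 28\equiv 0$ and $28\cdot 27\equiv 0\pmod 6$) while $28>6\cdot 3$; nothing in your argument excludes this, and the exponent side of the claim fails as well, since for $T\in\mathcal{S}_3$ one can have $v_p(d_T)>v_p(k)$. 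So the divisibility $k\mid\mathrm{lcm}(d_{S_0},d_{S_1})$, which you do establish correctly, cannot be upgraded to the desired inequality by number theory alone; the information that rules such degree sequences out is combinatorial, not arithmetic.

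The paper's proof is a two-line complex-product argument of a completely different nature. Let $S$ attain the minimum and $T$ the maximum. Since $S^t(y)\neq\emptyset$ for $(x,y)\in T$, we have $T\in\mathcal{R}_XS$, i.e.\ $c_{RS}^T>0$ for some $R\in\mathcal{R}_X$. Lemma~\ref{lem:int}(i) then gives $d_Rd_S=\sum_{T'}c_{RS}^{T'}d_{T'}\geq d_T$, and $d_R\leq k$ by Lemma~\ref{lem:psd}(i), so $d_T\leq kd_S$. If you want to salvage your approach you would have to import exactly this kind of structural input (e.g.\ via Lemma~\ref{lem:int}(i) or~(ii)); the congruences by themselves are too weak.
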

\begin{proof}
Let $S,T\in \mathcal{R}_{X,Y}$ such that
\[\mbox{$d_S=\min\{d_S\mid S\in \mathcal{R}_{X,Y}\}$ and
$d_T:=\max\{d_S\mid S\in \mathcal{R}_{X,Y}\}$.}\]
Then $T\in RS$ for some $R\in \mathcal{R}_X$ since $T\in \mathcal{R}_{X}S$.
Applying Lemma~\ref{lem:int}(i) we have $d_T\leq kd_S$.
\end{proof}

For $S\in\mathcal{R}_{X,Y}$ we define
\[\mathcal{U}_S:=\big\{R\in \mathcal{R}_{X}\mid R^t R\cap SS^t=\{\Delta_X\}\big\}.\]

\begin{lemma}\label{lem:key}
For each $S\in \mathcal{R}_{X,Y}$ we have the following:
\begin{enumerate}
\item $r-|\mathcal{U}_S|\leq (d_S-1)(k-1)$.

\item If $R\in \mathcal{U}_S- \{\Delta_X\}$,
then $k$ divides $d_T$ for each $T\in RS$.

\item If $\mathcal{U}_SS\cap \mathcal{S}_2= \emptyset$,
then $r< d_S(k+e-2)$.
\end{enumerate}
\end{lemma}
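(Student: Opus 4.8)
The plan is to treat the three parts in order, using the equitable-partition machinery and the complex-product identities from Section~\ref{sec:pre}. For part (i), I would fix $S\in\mathcal{R}_{X,Y}$ and count the relations $R\in\mathcal{R}_X\setminus\{\Delta_X\}$ that fail to lie in $\mathcal{U}_S$, i.e.\ those with $R^tR\cap SS^t\ne\{\Delta_X\}$. Such an $R$ contributes some non-identity relation to $SS^t$, so I would bound $|\mathcal{R}_X\setminus\mathcal{U}_S|$ by summing the ``multiplicities'' with which non-identity members of $SS^t$ arise; concretely, using Lemma~\ref{lem:int}(i) applied to $SS^t$ together with Lemma~\ref{lem:psd}(ii) (which gives $\sum_{R\in\mathcal{R}_X}c_{RR^t}^{R_0}=k-1$ for each non-identity $R_0$) and $|SS^t\setminus\{\Delta_X\}|\le d_S-1$ from Lemma~\ref{lem:int}(iii). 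The product $(d_S-1)(k-1)$ on the right-hand side is exactly what one gets by multiplying the number of available non-identity relations in $SS^t$ by the per-relation bound $k-1$, so the inequality $r-|\mathcal{U}_S|\le (d_S-1)(k-1)$ should fall out of this double count.

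For part (ii), suppose $R\in\mathcal{U}_S\setminus\{\Delta_X\}$ and $T\in RS$. The hypothesis $R^tR\cap SS^t=\{\Delta_X\}$ is the condition of Lemma~\ref{lem:crst} (with the roles of $S,T$ there played by $R$ and $S$, once one matches up the $SS^t$/$TT^t$ intersection form), so I get $c_{R^tT}^{R_0}\le 1$ for every $R_0\in\mathcal{R}$; in particular $c_{R^tT}^{S'}\le 1$ for $S'\in\mathcal{R}_{X,Y}$. Now I would invoke Lemma~\ref{lem:int}(ii): $c_{RS}^{T}d_T=c_{R^tT}^{S}d_S$, and more to the point the divisibility $\mathrm{lcm}(d_R,d_S)\mid c_{RS}^Td_T$. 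Since $R\ne\Delta_X$, Lemma~\ref{lem:psd}(i) gives $d_R=k$; combining $k=d_R\mid\mathrm{lcm}(d_R,d_S)\mid c_{RS}^Td_T$ with the fact that $c_{RS}^T\le\gcd(d_R,d_S)$ (so $c_{RS}^T\le k$) and that $c_{RS}^T$ cannot absorb a full factor of $k$ into the product unless $d_T$ already carries it — this is where the ``$c_{R^tT}^{\cdot}\le 1$'' input is used, to pin $c_{RS}^T$ down enough — I expect to conclude $k\mid d_T$. This step, reconciling the divisibility $k\mid c_{RS}^Td_T$ with the bound on $c_{RS}^T$, is the delicate point and the one I would write out most carefully.

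For part (iii), assume $\mathcal{U}_SS\cap\mathcal{S}_2=\emptyset$, i.e.\ no $T\in\mathcal{U}_SS$ has $d_T=k$. I would split $\mathcal{R}_X=\{\Delta_X\}\cup(\mathcal{U}_S\setminus\{\Delta_X\})\cup(\mathcal{R}_X\setminus\mathcal{U}_S)$. For $R\in\mathcal{U}_S\setminus\{\Delta_X\}$, part (ii) says every $T\in RS$ has $k\mid d_T$, and since $d_T\ne k$ by the hypothesis, either $d_T=0$ is impossible so in fact $d_T\ge 2k$, placing such $T$ in $\mathcal{S}_3$ — and the count of distinct such $T$ (over all $R\in\mathcal{U}_S\setminus\{\Delta_X\}$, via $\Delta_X\cdot S=\{S\}$ being excluded) together with Lemma~\ref{lem:bound}(iii), which caps $k|\mathcal{S}_3|+d_{\mathcal{S}_1}$ by $1+k(e-1)$, gives an upper bound on $|\mathcal{U}_S|-1$. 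On the other side, part (i) bounds $r-|\mathcal{U}_S|$ by $(d_S-1)(k-1)$. Adding the two bounds, $r=(r-|\mathcal{U}_S|)+(|\mathcal{U}_S|-1)+1$ should come out strictly below $d_S(k+e-2)$ after the arithmetic — the $(d_S-1)(k-1)$ term supplies the dominant $d_S k$, the $\mathcal{S}_3$/Bound(iii) contribution supplies the $d_S(e-2)$-type correction, and one checks the constants line up to force the strict inequality. The main obstacle throughout is bookkeeping in part (ii)'s divisibility argument and making sure the counting in (iii) does not double-count relations in $\bigcup_{R\in\mathcal{U}_S}RS$; everything else is assembling inequalities already available from Section~\ref{sec:pre}.
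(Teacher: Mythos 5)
Your overall strategy coincides with the paper's for all three parts, and part (i) is exactly the paper's argument: decompose $\mathcal{R}_X\setminus\mathcal{U}_S$ according to which element of $SS^t\setminus\{\Delta_X\}$ occurs in $R^tR$, bound $|SS^t\setminus\{\Delta_X\}|\le d_S-1$ by Lemma~\ref{lem:int}(iii), and bound each class by $k-1$ via Lemma~\ref{lem:psd}(ii). There are, however, two concrete problems in your (ii) and (iii).

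In (ii) you apply Lemma~\ref{lem:crst} with the wrong roles and extract the wrong intersection number. The membership $R\in\mathcal{U}_S$ says $(R^t)(R^t)^t\cap SS^t=\{\Delta_X\}$, so the lemma (applied to the pair $R^t$, $S$) yields $c_{RS}^T\le 1$ for every $T$, hence $c_{RS}^T=1$ for $T\in RS$; then $k=d_R\mid\mathrm{lcm}(d_R,d_S)\mid c_{RS}^Td_T=d_T$ and you are done. What you wrote, $c_{R^tT}^{S'}\le 1$, is a different quantity: feeding it into Lemma~\ref{lem:int}(ii) gives $c_{RS}^Td_T=c_{R^tT}^Sd_S\le d_S$, and the divisibility then forces $k\mid d_S$ rather than $k\mid d_T$, so the argument derails at precisely the point you flagged as delicate. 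In (iii), the step ``the count of distinct such $T$ gives an upper bound on $|\mathcal{U}_S|-1$'' does not work as stated, because many relations $R\in\mathcal{U}_S\setminus\{\Delta_X\}$ may produce the same $T\in\mathcal{S}_3$, so a small $|\mathcal{S}_3|$ does not directly force a small $|\mathcal{U}_S|$. The paper bridges this by working with degree sums rather than cardinalities: using $c_{RS}^T=1$ on $\mathcal{U}_S$ and part (i) it obtains the lower bound $d_{\mathcal{U}_SS}\ge 1+(|\mathcal{U}_S|-1)k-d_S\ge 1+[r-(d_S-1)(k-1)-1]k$, while Lemma~\ref{lem:min} together with Lemma~\ref{lem:bound}(iii) gives the upper bound $d_{\mathcal{S}_3}\le kd_S|\mathcal{S}_3|\le d_S[1+k(e-1)-d_{\mathcal{S}_1}]$; comparing the two yields $r<d_S(k+e-2)$. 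Note that Lemma~\ref{lem:min}, which your sketch never invokes, is needed to convert $|\mathcal{S}_3|$ into a bound on $d_{\mathcal{S}_3}$.
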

\begin{proof}
(i) Note that
\[\mathcal{R}_X-\mathcal{U}_S=\bigcup_{R_1\in SS^t-\{\Delta_X\}}\{R\in \mathcal{R}_X\mid R_1\in R^t R\}.\]
By Lemma~\ref{lem:int}(iii) with $c_{SS^t}^{\Delta_X}>0$,
\[|SS^t-\{\Delta_X\}|\leq d_S-1.\]
It follows from Lemma~\ref{lem:psd}(ii) that
\[|\{R\in \mathcal{R}_X\mid R_1\in R^t R\}|\leq\sum_{R\in \mathcal{R}}c_{R^t R}^{R_1}=k-1.\]
This implies that
\[r-|\mathcal{U}_S|=|\mathcal{R}_X-\mathcal{U}_S|\leq (d_S-1)(k-1).\]

(ii) It is an immediate consequence of Lemma~\ref{lem:int}(ii) and Lemma~\ref{lem:crst}.

(iii) Suppose that
\[\mathcal{U}_SS\cap \mathcal{S}_2= \emptyset.\]
Then we have
\[\mathcal{U}_SS\subseteq \mathcal{R}_{X,Y}-\mathcal{S}_2.\]
It follows from (ii) that
 \[(\mathcal{U}_S-\{\Delta_X\})S\subseteq \mathcal{S}_3.\]
By Lemma~\ref{lem:bound}(iii) and Lemma~\ref{lem:min},
 \begin{equation}\label{eq1}
 d_{\mathcal{S}_3}\leq d_Sk |\mathcal{S}_3|\leq d_S[1+k(e-1)-d_{ \mathcal{S}_1 }].
 \end{equation}
On the other hand, applying Lemma~\ref{lem:crst} and Lemma~\ref{lem:int}(iv) for the first inequality and (i) for the second one,
\begin{equation}\label{eq2}
 d_{\mathcal{U}_S S}\geq 1+(|\mathcal{U}_S|-1)k-d_S\geq 1+[r-(d_S-1)(k-1)-1]k.
\end{equation}
Since $(\mathcal{U}_S-\{\Delta_X\})S\subseteq \mathcal{S}_3$,
\[d_{\mathcal{U}_S S}-d_S\leq d_{(\mathcal{U}_S-\{\Delta_X\})S}\leq d_{\mathcal{S}_3}.\]
It follows from (\ref{eq1}) and (\ref{eq2})  that
 \[ 1+[r-(d_S-1)(k-1)-1]k-d_S \leq d_S[1+k(e-1)-d_{\mathcal{S}_1}],\]
and hence,
\[r\leq \frac{d_S}{k}[2+k(e-1)-d_{\mathcal{S}_1}]-\frac{1}{k}+(d_S-1)(k-1)+1.\]
Thus,
\[r\leq d_S[\frac{2}{k}+e-1-\frac{ d_{\mathcal{S}_1}}{k}+k-1]-k+2-\frac{1}{k}<d_S(k+e-2).  \]
This completes the proof of (iii).
\end{proof}

\begin{proposition}\label{prop:1}
If $r>k^2(k+e-2)$
where $e$ is the number of prime divisors of $k$,
then $1\in \{d_S\mid S\in \mathcal{R}_{X,Y}\}$.
\end{proposition}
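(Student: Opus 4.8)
The plan is to argue by contradiction, assuming $1\notin\{d_S\mid S\in\mathcal{R}_{X,Y}\}$, i.e. $\mathcal{S}_1\cup\mathcal{S}_2\cup\mathcal{S}_3=\mathcal{R}_{X,Y}$ contains no relation of degree $1$. Pick $S\in\mathcal{R}_{X,Y}$ realizing $d_S=\min\{d_T\mid T\in\mathcal{R}_{X,Y}\}$. Since no degree is $1$, every $T\in\mathcal{R}_{X,Y}$ has $d_T\ge 2$, and by Lemma~\ref{lem:min} all degrees lie between $d_S$ and $kd_S$. The idea is to show that the hypothesis $r>k^2(k+e-2)$ forces, via Lemma~\ref{lem:key}(iii), the existence of a relation $T\in\mathcal{U}_SS\cap\mathcal{S}_2$, and then to derive from $T\in\mathcal{S}_2$ (so $d_T=k$) together with $T\in RS$ for some $R\in\mathcal{U}_S$ a contradiction with $d_S\ge 2$.

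First I would dispose of the trivial sub-case $d_S=1$, which is exactly the conclusion, so assume $d_S\ge 2$. The main dichotomy is on whether $\mathcal{U}_SS\cap\mathcal{S}_2=\emptyset$. If it is empty, Lemma~\ref{lem:key}(iii) gives $r<d_S(k+e-2)$; combined with Lemma~\ref{lem:min} and Lemma~\ref{lem:int}(iv) — which forces $d_S\le k$ since $m=1+k(r-1)=\sum_{T}d_T\ge r\cdot d_S$ would otherwise be too large, or more carefully $d_S\le k$ because some $d_T=k$-type bound — one gets $r<k(k+e-2)\le k^2(k+e-2)$, contradicting the hypothesis. So in fact $\mathcal{U}_SS\cap\mathcal{S}_2\ne\emptyset$. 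Then there is $R\in\mathcal{U}_S$ with some $T\in RS$, $d_T=k$. Here $R\ne\Delta_X$ is impossible: if $R\ne\Delta_X$ then $R\in\mathcal{U}_S\setminus\{\Delta_X\}$, and Lemma~\ref{lem:key}(ii) says $k\mid d_{T'}$ for each $T'\in RS$ — that is consistent with $d_T=k$, so this alone is not yet a contradiction; the real leverage is that $T\in RS$ with $R\in\mathcal{U}_S$ means by Lemma~\ref{lem:crst} that $c_{R^tT}^{S}\le 1$ for all relations, hence by Lemma~\ref{lem:int}(i) applied to the product $d_Rd_T=\sum_{S'} c_{RS'^t\cdots}$, one reads off $d_Rd_T\equiv d_S$ or a divisibility that pins $d_S$.

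The cleanest route, and the one I would actually carry out: use Lemma~\ref{lem:key}(ii) in the form that when $R\in\mathcal{U}_S\setminus\{\Delta_X\}$ every $T'\in RS$ has $k\mid d_{T'}$, so $RS\cap\mathcal{S}_1=\emptyset$ for such $R$; combined with the emptiness hypothesis one gets $(\mathcal{U}_S\setminus\{\Delta_X\})S\subseteq\mathcal{S}_3$, exactly as in the proof of (iii). So $\mathcal{U}_SS\cap\mathcal{S}_2\ne\emptyset$ can only be witnessed through $R=\Delta_X$, i.e. $\Delta_X S=\{S\}\subseteq\mathcal{S}_2$, forcing $d_S=k$. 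But $d_S$ is the \emph{minimum} degree, so then every $T\in\mathcal{R}_{X,Y}$ has $d_T\ge k$, whence $m=\sum_{T\in\mathcal{R}_{X,Y}}d_T\ge rk>k^2(k+e-2)\cdot k$, while $m=1+k(r-1)<kr$, giving $r<\,$(something bounded), a contradiction with $r>k^2(k+e-2)\ge$ a large bound — more precisely $m=1+(r-1)k$ and $m\ge rk$ is already absurd since $1+(r-1)k=rk-(k-1)<rk$. Hence $d_S=k$ is impossible too, so $\mathcal{U}_SS\cap\mathcal{S}_2\ne\emptyset$ is impossible, contradicting the first paragraph's conclusion. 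Therefore the assumption $1\notin\{d_S\}$ fails.

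The step I expect to be the main obstacle is bookkeeping the two competing inequalities from Lemma~\ref{lem:key}: namely matching the lower bound on $d_{\mathcal{U}_SS}$ in \eqref{eq2} (which is linear in $r$ with slope $k$) against the upper bound in \eqref{eq1} (linear in $d_S$ with slope $k(k-1)$), and checking that the threshold $r>k^2(k+e-2)$ is exactly what is needed to force $\mathcal{U}_SS\cap\mathcal{S}_2\ne\emptyset$ once $d_S$ is controlled by Lemma~\ref{lem:min} and Lemma~\ref{lem:int}(iv). Getting the constant right — i.e. verifying $d_S\le k$ always holds so that the bound from part (iii), $r<d_S(k+e-2)\le k(k+e-2)$, genuinely contradicts $r>k^2(k+e-2)$ — is the delicate point; everything else is a direct assembly of Lemmas~\ref{lem:int}, \ref{lem:min}, \ref{lem:key} and \ref{lem:crst}.
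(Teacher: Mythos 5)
Your first half is on track and matches the paper: the minimum degree is at most $k$ because $\sum_{T}d_T=m=1+k(r-1)<kr$, Lemma~\ref{lem:min} then bounds every degree by $k^2$, and the contrapositive of Lemma~\ref{lem:key}(iii) (using $r>k^2(k+e-2)\geq d_S(k+e-2)$) forces $\mathcal{U}_SS\cap\mathcal{S}_2\neq\emptyset$. Note, though, that the paper needs this nonemptiness for \emph{every} $S\in\mathcal{R}_{X,Y}$, not just the one of minimal degree, which is precisely why the threshold is $k^2(k+e-2)$ rather than $k(k+e-2)$.

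The second half contains a genuine logical error. You argue that the witness to $\mathcal{U}_SS\cap\mathcal{S}_2\neq\emptyset$ must be $R=\Delta_X$ because $(\mathcal{U}_S\setminus\{\Delta_X\})S\subseteq\mathcal{S}_3$; but that inclusion was derived in the proof of Lemma~\ref{lem:key}(iii) \emph{from} the hypothesis $\mathcal{U}_SS\cap\mathcal{S}_2=\emptyset$, which you have just refuted. Without that hypothesis, Lemma~\ref{lem:key}(ii) only gives $k\mid d_{T'}$ for $T'\in RS$, i.e.\ $T'\in\mathcal{S}_2\cup\mathcal{S}_3$, so a non-diagonal $R\in\mathcal{U}_S$ can perfectly well produce $T\in RS$ with $d_T=k$. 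Hence the forced conclusion $d_S=k$ and the ensuing contradiction are unfounded, and your proof never actually produces a relation of degree one (the proposition is a positive existence statement, not a reductio). The missing idea is the divisibility extraction: for $R\in\mathcal{U}_S$ we have $R^tR\cap SS^t=\{\Delta_X\}$, so Lemma~\ref{lem:crst} gives $c_{RS}^T\leq 1$, hence $c_{RS}^T=1$, and Lemma~\ref{lem:int}(ii) gives $\operatorname{lcm}(d_R,d_S)\mid c_{RS}^Td_T=k$; thus $d_S\mid k$ for every $S\in\mathcal{R}_{X,Y}$. This kills $\mathcal{S}_3$, and the count $d_{\mathcal{S}_1}=1+k(|\mathcal{S}_1|-1)$ together with $d_S\leq k/2$ for $S\in\mathcal{S}_1$ forces $|\mathcal{S}_1|=1$ and then $d_S=1$ for its unique element. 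That final arithmetic step is what your proposal is missing entirely.
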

\begin{proof}
We claim that
\[\min\{d_S\mid S\in \mathcal{R}_{X,Y}\}\leq k.\]
If not, then
\[1+k(r-1)=m=\sum_{S\in \mathcal{R}_{X,Y}}d_S> kr,\]
a contradiction.

By Lemma~\ref{lem:min},
\[\max\{d_S\mid S\in \mathcal{R}_{X,Y}\}\leq k^2.\]
Applying the contraposition of Lemma~\ref{lem:key}(iii) we have
\[\mbox{$\mathcal{U}_SS\cap \mathcal{S}_2\ne \emptyset$ for each $S\in \mathcal{R}_{X,Y}$,}\]
and hence,
$T\in RS$ for some $R\in \mathcal{U}_S$ and $T\in \mathcal{S}_2$.
Since $d_T=k$ and $c_{RS}^T=1$ by Lemma~\ref{lem:crst},
$d_S$ divides $k$ for each $S\in \mathcal{R}_{X,Y}$.
This implies that $|\mathcal{S}_3|=0$.

We claim $|\mathcal{S}_1|=1$.
Suppose not.
Since $1+(r-1)k=m=d_{\mathcal{S}_1}+k(r-|\mathcal{S}_1|)$,
\[1+k|\mathcal{S}_1|\leq k+\sum_{S\in \mathcal{S}_1}d_S\leq k+k/2+k/2+(|\mathcal{S}_1|-2)k,\]
a contradiction.

By the claim we have $\mathcal{S}_1=\{S\}$ for some $S\in \mathcal{R}_{X,Y}$.
Since
\[1+k(r-1)=m=k|\mathcal{S}_2|+d_S=k(r-1)+d_S,\]
we have $d_S=1$.
This completes the proof.
\end{proof}

\begin{lemma}\label{lem:rst}
If $S,T\in \mathcal{R}_{X,Y}$ with $ST^t=\{R\}$,
then \[\mbox{$c_{RR^t}^{R_1}\geq d_T$ for each $R_1\in SS^t$ and $c_{R^t R}^{R_2}\geq d_S$ for each $R_2\in TT^t$.}\]
\end{lemma}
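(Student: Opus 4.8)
The plan is to establish the two inequalities by studying the complex product $ST^t = \{R\}$ from both sides and tracking how the "diagonal" intersection numbers $c_{RR^t}^{R_1}$ and $c_{R^tR}^{R_2}$ count configurations within the fibers. Since $ST^t = \{R\}$ is a singleton, by Lemma~\ref{lem:int}(iii) the valency $d_R$ is forced to be small relative to $\gcd(d_S,d_T)$; more precisely, the singleton hypothesis should give $d_S d_T = c_{ST^t}^R d_R$ from Lemma~\ref{lem:int}(i), and combined with Lemma~\ref{lem:int}(ii) one gets relations linking $d_R$, $d_S$, $d_T$. The key observation is that $R \in ST^t$ forces, for any $R_1 \in SS^t$, the product $R R^t$ to "contain enough" of the diagonal-type relations coming from $S$, because $SS^t = S T^t T S^t \subseteq R (T^t T) S^t$ or, dually, because $R_1 \in SS^t$ and $R = ST^t$ together imply $R_1 R \subseteq S(T^tT)S^t \cdot$-type containments; the numeric content of such a containment is exactly an inequality $c_{RR^t}^{R_1}\geq d_T$.

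**First I would** write down the counting identity for $c_{RR^t}^{R_1}$ where $R_1 \in SS^t$: fix $(u,w)\in R_1 \subseteq X\times X$; then $c_{RR^t}^{R_1} = |R(u)\cap R(w)|$ (a set of points in $Y$). Because $ST^t=\{R\}$, we have $R(u) = $ the set of $y\in Y$ with $(u,y)\in R$, and I want to exhibit at least $d_T$ such $y$ lying in $R(u)\cap R(w)$. The natural source: pick $z\in Y$ with $(u,z)\in S$ and $(w,z)\in S$ — such $z$ exist in number $c_{SS^t}^{R_1}\geq 1$ since $R_1\in SS^t$ — and then for any $z'$ with $(z,z')\in T^tT$... hmm, rather: since $(u,z)\in S$, for every $y\in Y$ with $(z,y)\in$ (appropriate relation) we get $(u,y)\in ST^t=\{R\}$, i.e. $(u,y)\in R$. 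Concretely, choosing $(z',z)\in T$ we get a unique... Let me instead use: for $(u,z)\in S$ and any $x'\in X$ with $(x',z)\in T$, since $S T^t = \{R\}$ we get $(u,x')\in R^t$... I would unwind this so that the $d_T = d_{T^t}\cdot|Y|/|X| = d_T$ preimages under $T$ of a fixed $z\in Y$ produce $d_T$ distinct elements of $R(u)\cap R(w)$, giving $c_{RR^t}^{R_1}\geq d_T$. The second inequality $c_{R^tR}^{R_2}\geq d_S$ for $R_2\in TT^t$ follows by the symmetric argument with the roles of $S,T$ (and of $X,Y$, noting $R^t \in (ST^t)^t = TS^t$) interchanged.

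**The main obstacle** will be making the injectivity in the above "exhibit $d_T$ distinct witnesses" step airtight — i.e. verifying that the $d_T$ points of $Y$ (or $X$) produced really are distinct and really all land in the required intersection $R(u)\cap R(w)$, using only that $ST^t$ is the single relation $\{R\}$ (so that membership in $S$ followed by $T^t$ is \emph{deterministic} at the level of relations, forcing everything into $R$). Once the correct chain of incidences is set up, distinctness should come for free from the definition of $d_T$ as $c_{TT^t}^{\Delta_Y}$ counting $T$-neighbours of a fixed vertex, and the landing-in-$R$ part is immediate from $ST^t=\{R\}$. I expect no deep input beyond Lemma~\ref{lem:int}; this is essentially a bookkeeping lemma, and its role downstream is presumably to feed lower bounds on diagonal intersection numbers into Lemma~\ref{lem:psd}(ii)-type sum constraints.
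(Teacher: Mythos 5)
Your final argument is exactly the paper's: since $R_1\in SS^t$, any $(u,w)\in R_1$ has a common $S$-neighbour $z\in Y$, and all $d_T$ points of $T^t(z)\subseteq X$ land in $R(u)\cap R(w)$ because $ST^t=\{R\}$ is deterministic at the level of relations, giving $c_{RR^t}^{R_1}\geq d_T$, with the second inequality by symmetry. The only slips to fix are bookkeeping: $R\in\mathcal{R}_{X,X}$, so $R(u)\cap R(w)$ is a subset of $X$ (not of $Y$), the chain $(u,z)\in S$, $(z,x')\in T^t$ puts $(u,x')$ in $R$ (not $R^t$), and the distinctness you worried about is automatic since the witnesses are the $d_{T^t}=d_T$ distinct elements of $T^t(z)$.
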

\begin{proof}
Let $y\in Y$, $x_1,x_2\in S^t(y)$ and $z\in T^t(y)$.
Note that $(x_i,z)\in R$ for $i=1,2$ since $S T^t =\{R\}$.
Since $z\in T^t(y)$ is arbitrarily taken,
we have $T^t(y)\subseteq R(x_1)\cap R(x_2)$, which proves
the first statement. By the symmetric argument the second statement can be proved.
\end{proof}

\begin{proposition}\label{prop:lambda}
There exist no $S,T\in \mathcal{R}_{X,Y}$ such that
\begin{equation}\label{eq3}
\mbox{$ST^t=\{R\}$, $d_S+d_T\geq k+1$ and $1<d_S<d_T$.}
\end{equation}
\end{proposition}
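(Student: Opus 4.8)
The plan is to argue by contradiction. Suppose $S,T\in\mathcal{R}_{X,Y}$ satisfy (\ref{eq3}), i.e. $ST^t=\{R\}$ with $d_S+d_T\ge k+1$ and $1<d_S<d_T$. Since $ST^t=\{R\}$ is a singleton, Lemma~\ref{lem:int}(iii) forces $\gcd(d_S,d_T)=1$, so in particular $d_S$ and $d_T$ are coprime; combined with $1<d_S<d_T$ this already tells us $d_T\ge d_S+1$ and neither equals $k$ unless it is forced. The main tool will be Lemma~\ref{lem:rst}: from $ST^t=\{R\}$ we get $c_{RR^t}^{R_1}\ge d_T$ for every $R_1\in SS^t$ and $c_{R^tR}^{R_2}\ge d_S$ for every $R_2\in TT^t$. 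I would feed these lower bounds into the counting identity of Lemma~\ref{lem:psd}(ii), namely $\sum_{U\in\mathcal{R}_X}c_{UU^t}^{R_1}=k-1$ for each $R_1\in\mathcal{R}_X\setminus\{\Delta_X\}$, to bound the sizes of $SS^t$ and $TT^t$ from above: roughly, $|SS^t\setminus\{\Delta_X\}|\cdot d_T\le$ (something like) $k-1$ once we account for the contribution of the term $U=R$, and similarly with $S,T$ swapped. This should pin $|SS^t|$ and $|TT^t|$ down to be very small — ideally $SS^t=TT^t=\{\Delta_X\}$, i.e. $R$ is "thin" on both sides.

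Next I would exploit the relation $|SS^t|\le d_S$ and $|TT^t|\le d_T$ from Lemma~\ref{lem:int}(iii) together with the above. If indeed $SS^t=\{\Delta_X\}$ then by Lemma~\ref{lem:int}(iii) applied to $c_{SS^t}^{\Delta_X}=d_S>1$ we would want $SS^t$ to possibly be larger, so the cleaner route is: pick $R_1\in SS^t$ with $R_1\ne\Delta_X$ if one exists. Then $c_{RR^t}^{R_1}\ge d_T$, and also $c_{RR^t}^{\Delta_X}=d_R$. Since $R\in\mathcal{R}_X$ and $\mathcal{C}_X$ is an association scheme of prime order, Lemma~\ref{lem:psd}(i) gives $d_R\in\{1,k\}$; but $d_R=d_S$ by $c_{SS^t}^{\Delta_X}=d_R$ (wait — actually $R\in ST^t\subseteq\mathcal{R}_X$, and $|R|=d_R|X|$; relating $d_R$ to $d_S,d_T$ via Lemma~\ref{lem:int}(ii) applied to the singleton $ST^t=\{R\}$ gives $c_{ST^t}^R d_R = \mathrm{lcm}$-type constraints, and in fact $d_R\cdot 1 = c_{ST^t}^R d_R = c_{RT}^S d_S$, so $d_S\mid d_R$ and symmetrically $d_T\mid d_R$; coprimality then gives $d_Sd_T\mid d_R$, hence $d_R\ge d_Sd_T$). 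Now $d_R\in\{1,k\}$ forces $d_R=k$ and $d_Sd_T\le k$, which contradicts $d_S+d_T\ge k+1$ together with $d_S\ge 2$ (since $2\cdot t\le st$ for $s\ge 2$, we get $2d_T\le d_Sd_T\le k\le d_S+d_T-1<2d_T$, a contradiction). The remaining case is $SS^t=\{\Delta_X\}$, which via Lemma~\ref{lem:crst} says $c_{S^tU}^W\le 1$ for all relevant $U,W$; I would then re-derive $d_S\mid d_R$ and push through the same $d_R\in\{1,k\}$ dichotomy.

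The step I expect to be the genuine obstacle is making the Lemma~\ref{lem:rst}/Lemma~\ref{lem:psd}(ii) counting tight enough to eliminate the possibility that $SS^t$ or $TT^t$ is large: a priori $|SS^t|$ can be as big as $d_S$, and the inequality $\sum_U c_{UU^t}^{R_1}=k-1$ only bounds a single column, so one must be careful to sum over several $R_1\in SS^t$ simultaneously and track overlaps. The cleanest path is probably to show directly that $d_S\mid d_R$ and $d_T\mid d_R$ from Lemma~\ref{lem:int}(ii) (using that $ST^t=\{R\}$ makes $c_{ST^t}^R=1$, so $d_R=c_{RT}^S d_S=c_{R^tS^{t}\cdots}$, yielding the divisibilities), deduce $d_Sd_T\mid d_R$ by coprimality, then invoke $d_R\le k$ — which follows since $d_R\in\{1,k\}$ by Lemma~\ref{lem:psd}(i) — and finally observe $d_Sd_T\le k$ is incompatible with $d_S\ge 2$, $d_S<d_T$, and $d_S+d_T\ge k+1$. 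If that divisibility shortcut works it bypasses the delicate counting entirely; otherwise the counting argument above is the fallback.
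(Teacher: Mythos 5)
Your proposal contains two fatal errors, both located in the ``divisibility shortcut'' that you rely on to close the argument. First, Lemma~\ref{lem:int}(iii) gives $|ST^t|\leq \gcd(d_S,d_T)$, so coprimality would imply that $ST^t$ is a singleton, not conversely: from $ST^t=\{R\}$ you cannot conclude $\gcd(d_S,d_T)=1$. Second, and more seriously, $c_{ST^t}^R=1$ is not a consequence of $ST^t=\{R\}$. By Lemma~\ref{lem:int}(i), $d_Sd_T=c_{ST^t}^Rd_R$, and since $R\neq\Delta_X$ (otherwise $S=T$) Lemma~\ref{lem:psd}(i) gives $d_R=k$, so $c_{ST^t}^R=d_Sd_T/k$. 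Under the hypotheses of the proposition this is strictly greater than $1$: if $d_Sd_T=k$, then $d_S+d_T\geq k+1=d_Sd_T+1$ gives $(d_S-1)(d_T-1)\leq 0$, contradicting $1<d_S$. Hence the chain $d_S\mid d_R$, $d_T\mid d_R$, $d_Sd_T\mid d_R\leq k$ collapses at its first link. In fact the situation $c_{ST^t}^R=1$ is precisely the easy case (by Lemma~\ref{lem:crst} it corresponds to $SS^t\cap TT^t=\{\Delta_X\}$, which the paper disposes of in one line exactly as above); the entire difficulty of the proposition lies in the complementary case, which your argument never touches.

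Your fallback counting argument points in a reasonable direction but is not carried to any conclusion, and it cannot end where you hope: $SS^t=\{\Delta_X\}$ is impossible for $d_S>1$, since Lemma~\ref{lem:int}(i) would then give $d_S^2=d_S$. The paper's actual proof is a chain of structural claims: Lemma~\ref{lem:rst} combined with $d_Rd_{R^t}=k^2$ and $d_S+d_T\geq k+1$ forces $SS^t=\{\Delta_X,R_1\}$ with exactly one non-diagonal $R_1$; the case $SS^t\cap TT^t=\{\Delta_X\}$ is excluded as above, so $R_1\in TT^t$ as well; Lemma~\ref{lem:psd}(ii) then forces $R=R^t$, then $TT^t=\{\Delta_X,R_1\}$, then $c_{R_1R_1^t}^{R_1}\geq d_T-2$, then $R=R_1$ with $c_{RR^t}^{R}=k-1$, so that $\{\Delta_X,R\}$ is closed under the complex product; hence $k+1$ divides the prime $|X|$, forcing $|\mathcal{R}_X|=2$ and contradicting $r>2$. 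None of these steps, in particular the final appeal to the primality of $|X|$, appears in your proposal.
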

\begin{proof}
Suppose that $S,T\in \mathcal{R}_{X,Y}$ satisfies (\ref{eq3}).

We claim that $SS^t=\{\Delta_X,R_1\}$ for some $R_1\in \mathcal{R}_X-\{\Delta_X\}$.
Suppose not, i.e., $SS^t-\{\Delta_X\}$ has at least two elements $R_1$, $R_2$.
By Lemma~\ref{lem:int}(i),
\[k^2=d_Rd_{R^t}\geq  k+c_{RR^t}^{R_1}d_{R_1}+c_{RR^t}^{R_2}d_{R_2}= k+c_{RR^t}^{R_1}k+c_{RR^t}^{R_2}k.\]
It follows from Lemma~\ref{lem:rst} and $d_S+d_T\geq k+1$ that
\[k^2\geq k(k+2),\]
a contradiction.

We claim that $SS^t\cap TT^t=\{\Delta_X,R_1\}$.
Suppose not, i.e., $SS^t\cap TT^t=\{\Delta_X\}$.
Then, by Lemma~\ref{lem:crst}, $c_{ST^t}^R=1$.
It follows from Lemma~\ref{lem:int}(i) that $k=d_R=d_Sd_T$,
which contradicts
$d_S+d_T\geq k+1$ and $1<d_S<d_T$.

We claim that $R=R^t$.
Suppose not, i.e., $R\ne R^t$.
Then, by Lemma~\ref{lem:psd}(ii),
\[k-1=\sum_{R_2\in \mathcal{R}_X}c_{R_2 R_2^t}^{R_1}\geq c_{RR^t}^{R_1}+c_{R^t R}^{R_1}\geq d_S+d_T\geq k+1,\]
a contradiction.

We claim that $TT^t=\{\Delta_X,R_1\}$.
If $R_2\in TT^t-\{\Delta_X,R_1\}$,
then $c_{R R}^{R_2}\geq d_S$ by Lemma~\ref{lem:rst} with $R=R^t$.
By Lemma~\ref{lem:int}(i),
\[k^2=d_{R}d_{R}\geq k+c_{RR}^{R_1}k+c_{RR}^{R_2}k,\]
which implies that $k\geq 1+d_T+d_S$, a contradiction to $d_S+d_T\geq k+1$.

We claim that $c_{R_1R_1^t}^{R_1}\geq d_T-2$.
By the previous claim, for all $z_1,z_2\in T^t(y)$ with $z_1\ne z_2$
we have $(z_1,z_2)\in R_1$.
Thus, \[c_{R_1R_1^t}^{R_1}=|R_1(z_1)\cap R_1(z_2)|\geq |T^t(y)-\{z_1,z_2\}|\geq d_T-2.\]

Since $c_{R_1R_1^t}^{R_1}+c_{RR^t}^{R_1}\geq d_T-2+d_T\geq k$ by Lemma~\ref{lem:rst},
it follows from Lemma~\ref{lem:psd}(ii)
that $R=R_1$. Thus, $c_{RR^t}^R=k-1$
since $1<d_S$ and
\[\mbox{$S^t(y)\cup T^t(y)\setminus \{x_1,x_2\}\subseteq
R(x_1)\cap R(x_2)$ for $x_1,x_2\in S^t(y)$.}\]

Since $\{\Delta_X,R\}$ is closed under the complex product, $1+k$ divides $|X|$.
Since $|X|$ is a prime, it follows that $\{\Delta_X,R\}=\mathcal{R}_{X}$, and hence $|\mathcal{R}_X|=2$,
a contradiction.
\end{proof}

\begin{lemma}\label{lem:41}
Suppose that $k=4q$ for some prime power $q$ and $1\notin \{d_S\mid S\in \mathcal{R}_{X,Y}\}$.
Then $|\mathcal{S}_3|=0$, $|\mathcal{S}_1|=2$, and $\{d_S\mid S\in \mathcal{S}_1\}=\{3q,q+1\}$.
\end{lemma}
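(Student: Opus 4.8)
Here is the strategy I would follow.

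The plan is to exploit the very restricted factorization $k=4q$: since $q$ is a prime power, $k$ has at most two distinct prime divisors, exactly one when $q$ is a power of $2$ and exactly two when $q$ is an odd prime power. The proof combines the multiplicative congruences of Lemma~\ref{lem:ba} with the counting estimates of Lemma~\ref{lem:bound} and, crucially, with the non-existence statement of Proposition~\ref{prop:lambda}. Throughout one uses $\sum_{S\in\mathcal{R}_{X,Y}}d_S=m=1+(r-1)k$, which follows from Lemma~\ref{lem:int}(iv) and Lemma~\ref{lem:psd}(i).

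First I would determine $|\mathcal{S}_1|$. It cannot be $0$: otherwise $k$ divides every $d_S$ with $S\in\mathcal{R}_{X,Y}$, forcing $k\mid m$, which contradicts $m\equiv1\pmod k$ and $k>1$. It cannot be $1$ either: if $\mathcal{S}_1=\{S_0\}$, then Lemma~\ref{lem:bound}(i) applied to each prime divisor of $k$ shows $\gcd(d_{S_0},k)=1$, so Lemma~\ref{lem:ba}(i) gives $d_{S_0}\equiv1\pmod k$, while Lemma~\ref{lem:bound}(iii) gives $d_{S_0}\le1+k$; hence $d_{S_0}\in\{1,k+1\}$, so $d_{S_0}=k+1$ and $\mathcal{S}_3=\emptyset$, and then $m=(k+1)+(r-1)k\ne1+(r-1)k$, a contradiction. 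By Lemma~\ref{lem:bound}(ii), $|\mathcal{S}_1|\le e$, so we conclude $|\mathcal{S}_1|=2$ and $e=2$; in particular $q$ is an odd prime power, say $q=p^a$, and the prime divisors of $k$ are $2$ and $p$.

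Next I would identify the two members of $\mathcal{S}_1$. By Lemma~\ref{lem:bound}(i) there is a unique $S_1\in\mathcal{R}_{X,Y}$ with $d_{S_1}$ odd and a unique $S_2\in\mathcal{R}_{X,Y}$ with $p\nmid d_{S_2}$, both lie in $\mathcal{S}_1$, and since $|\mathcal{S}_1|=2$ they are distinct; thus $d_{S_1}$ is odd with $p\mid d_{S_1}$ and $d_{S_2}$ is even with $p\nmid d_{S_2}$. From $k\mid d_{S_1}(d_{S_1}-1)$ (Lemma~\ref{lem:ba}(i)) and $\gcd(d_{S_1},d_{S_1}-1)=1$ one gets $p^a\mid d_{S_1}$, so $q\mid d_{S_1}$; write $d_{S_1}=qu$, where $u$ is odd because $q$ and $d_{S_1}$ are odd. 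To dispose of $\mathcal{S}_3$, compare $\sum_{S\in\mathcal{R}_{X,Y}}d_S$ with $1+(r-1)k$ using $|\mathcal{S}_1|=2$: this gives $d_{S_1}+d_{S_2}+d_{\mathcal{S}_3}=1+(1+|\mathcal{S}_3|)k$, and since every relation in $\mathcal{S}_3$ has degree $\ge2k$ while $d_{S_1}+d_{S_2}\ge2$, we are forced into $\mathcal{S}_3=\emptyset$ and $d_{S_1}+d_{S_2}=1+k=1+4q$. Then $d_{S_2}=1+q(4-u)\ge1$ forces $u\le4$, hence $u\in\{1,3\}$.

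It remains to exclude $u=1$, and this is where the argument stops being purely arithmetic. If $u=1$ then $d_{S_1}=q$ and $d_{S_2}=3q+1$, which are coprime, so by Lemma~\ref{lem:int}(i),(iii) the complex product $S_1S_2^t$ is a single relation $\{R\}$; but then $d_{S_1}+d_{S_2}=4q+1=k+1$ and $1<d_{S_1}<d_{S_2}$ (as $q\ge3$), contradicting Proposition~\ref{prop:lambda}. Hence $u=3$, so $d_{S_1}=3q$ and $d_{S_2}=q+1$, which gives $\mathcal{S}_3=\emptyset$, $|\mathcal{S}_1|=2$ and $\{d_S\mid S\in\mathcal{S}_1\}=\{3q,q+1\}$, as claimed. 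I expect this last step to be the main obstacle: the congruence and counting constraints alone leave both $\{q,3q+1\}$ and $\{3q,q+1\}$ as arithmetically consistent possibilities for the degrees in $\mathcal{S}_1$, and it is precisely the fact that coprime degrees force $S_1S_2^t$ to be a matching onto a single relation that lets Proposition~\ref{prop:lambda} break the symmetry.
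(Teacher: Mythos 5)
Your proof is correct and follows essentially the same route as the paper: Lemma~\ref{lem:ba} and Lemma~\ref{lem:bound} narrow the degrees in $\mathcal{S}_1$ to the two arithmetic possibilities $\{q,3q+1\}$ and $\{3q,q+1\}$ with $d_{S_1}+d_{S_2}=k+1$ and $\mathcal{S}_3=\emptyset$, and then coprimality of $q$ and $3q+1$ together with Lemma~\ref{lem:int}(iii) and Proposition~\ref{prop:lambda} eliminates the first possibility. Your write-up is in fact somewhat more careful than the paper's (which does not explicitly rule out $|\mathcal{S}_1|\le 1$ and is terse about which element of $\mathcal{S}_1$ satisfies $d_S\equiv 1\pmod q$), but the underlying argument is the same.
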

\begin{proof}
By Lemma~\ref{lem:bound}(iii) and the assumption, $|\mathcal{S}_3|=0$.
By Lemma~\ref{lem:bound}(ii), $|\mathcal{S}_1|\leq 2$.
Let $S\in \mathcal{S}_1$.
Then, by Lemma~\ref{lem:ba}, $d_S\equiv 1\mod{q}$.
By the assumption, $1<d_S<4q$.
Since $d_S\leq d_{\mathcal{S}_1}\leq 1+4q$ Lemma~\ref{lem:bound}(iii),
it follows from Lemma \ref{lem:ba} that
\[d_S\in \{q+1,3q+1\}.\]
Let $T\in \mathcal{R}_{X,Y}$ with $S\ne T$.
Since $d_Sd_T\equiv 0\mod{4q}$ by Lemma~\ref{lem:ba},
$q\mid d_T$.
Since $m=1+k(r-1)=d_{\mathcal{S}_1}+d_{\mathcal{S}_2}=d_S+d_T+k(r-2)$,
we have $d_S+d_T=k+1$.
Therefore, we conclude from Proposition~\ref{prop:lambda} that $\{d_S\mid S\in \mathcal{S}_1\}=\{3q,q+1\}$.
\end{proof}

\begin{proof}[\textbf{Proof of Theorem~\ref{thm:main2}}]
(i) is a direct consequence of Proposition~\ref{prop:1}.

(ii) Suppose on the contrary that \[1\notin \{d_S\mid S\in \mathcal{R}_{X,Y}\}.\]
Note that $e\leq 2$ if $k\in \{q,2q,3q\}$ for some prime power $q$.
By Lemma~\ref{lem:bound}(iii), $|\mathcal{S}_3|=0$, and $d_{\mathcal{S}_1}\leq k+1$.
Since
\[1+k(r-1)=d_{\mathcal{S}_1}+d_{\mathcal{S}_2}\leq k+1+d_{\mathcal{S}_2},\]
we have $d_{\mathcal{S}_2}\geq k(r-2)$, and, hence,
$|\mathcal{S}_2|\geq r-2$.

Suppose $k=q$. Then the statement follows from Lemma~\ref{lem:bound}(iii) since $e=1$.

Suppose $k=2q$. Then $|\mathcal{S}_1|\leq 2$ and
$\{d_S\mid S\in \mathcal{S}_1\}=\{q,q+1\}$ by Lemma~\ref{lem:bound}(ii),(iii) and Lemma~\ref{lem:ba}.
Without loss of generality we assume that
\[\mbox{$\mathcal{S}_1=\{S,T\}$, $d_S=q+1$ and $d_T=q$.}\]
Since $q$ and $q+1$ are relatively prime, it follows from Lemma~\ref{lem:int}(iii) that
$ST^t=\{R\}$ for some $R\in\mathcal{R}$, which contradicts Proposition~\ref{prop:lambda}.

Suppose $k=3q$.
Then we have either
\[\mbox{$\{d_S\mid S\in \mathcal{S}_1\}=\{q,2q+1\}$
or $\{d_S\mid S\in \mathcal{S}_1\}=\{2q,q+1\}$.}\]

The first case is done by Proposition~\ref{prop:lambda}.

For the last case we assume that
$\mathcal{S}_1=\{S,T\}$, $d_S=q+1$ and $d_T=2q$.
By Lemma~\ref{lem:int}(i),(ii),
$SS^t=\{\Delta_X,R\}$ for some $R\in \mathcal{R}$ with $R=R^t$.
This implies that $k=d_R$ is even since $|X|$ is an odd prime, so $q$ is a power of two.
Thus, $d_S$ and $d_T$ are relatively prime.
Therefore, the statement follows from Lemma~\ref{lem:int}(iii) and Proposition~\ref{prop:lambda}.

(iii) Suppose $k=4q$. Then, by Lemma~\ref{lem:41},
$\{d_S\mid S\in\mathcal{R}_{X,Y}\}=\{q,3q+1\}$ or $\{d_S\mid S\in\mathcal{R}_{X,Y}\}=\{3q,q+1\}$.
The statement follows from the assumption and Proposition~\ref{prop:lambda}.
\end{proof}

\section{Appendix}\label{sec:ad}
In this section we show how Theorem~\ref{thm:main2} is applied to small configurations $\mathcal{C}_{X\cup Y}$ with $|X|=|Y|<100$.

First, we denote by $\mathcal{M}$ the set of primes $m$ less than 100.

Second, we take the set $\mathcal{K}$ of positive integers $k$ such that
\[\mbox{$k\mid m-1$ for some $m\in \mathcal{M}$ with $k<m-1$ and}\]
\[k\notin \{q,2q,3q \mid \mbox{$q$ is a prime power} \} \cup \{ 4q\mid \mbox{ $q$ is a prime power with $3\nmid q+1$} \}.\]
Then $\mathcal{K}=\{20,30,35,44\}$.

\begin{lemma}\label{lem44}
If $k=20$, then $1\in \{d_S\mid S\in \mathcal{R}_{X,Y}\}$.
\end{lemma}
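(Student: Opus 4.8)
The plan is to argue by contradiction, assuming $1\notin\{d_S\mid S\in\mathcal{R}_{X,Y}\}$, and to run the same style of analysis used in the proof of Theorem~\ref{thm:main}(ii),(iii) but now with $k=20=2^2\cdot 5$, so $e=2$. First I would invoke Lemma~\ref{lem:bound}(iii): since $|\mathcal{S}_3|\ge 0$ and $d_{\mathcal{S}_1}\ge 0$, the bound $k|\mathcal{S}_3|+d_{\mathcal{S}_1}\le 1+k(e-1)=21$ forces $|\mathcal{S}_3|=0$ and $d_{\mathcal{S}_1}\le 21$. By Lemma~\ref{lem:bound}(ii), $|\mathcal{S}_1|\le 2$, and since $m=1+k(r-1)=d_{\mathcal{S}_1}+k|\mathcal{S}_2|$ with $|\mathcal{S}_1|+|\mathcal{S}_2|=r$, we get $d_{\mathcal{S}_1}=1+k(|\mathcal{S}_1|-1)$, so in fact $|\mathcal{S}_1|=2$ and $d_{\mathcal{S}_1}=21$ exactly (the case $|\mathcal{S}_1|\le 1$ would force $1\in\{d_S\}$, contrary to assumption). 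Write $\mathcal{S}_1=\{S,T\}$ with $d_S+d_T=21$ and $1<d_S<d_T<20$.

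Next I would pin down the pair $(d_S,d_T)$ using the congruences from Lemma~\ref{lem:ba}: both $d_S$ and $d_T$ satisfy $x^2\equiv x\pmod{20}$, i.e. $x(x-1)\equiv 0\pmod{4}$ and $\pmod 5$, while $d_Sd_T\equiv 0\pmod{20}$. The idempotents mod $20$ are $0,1,5,16$; combined with $1<d_S<d_T<20$ and $d_S+d_T=21$ this leaves the candidates $\{d_S,d_T\}=\{5,16\}$. So we must have one relation of valency $5$ and one of valency $16$ in $\mathcal{S}_1$, and all other relations in $\mathcal{R}_{X,Y}$ have valency $20$.

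The final step is to derive a contradiction from this configuration, and this is where I expect the main obstacle to lie. Since $\gcd(5,16)=1$, Lemma~\ref{lem:int}(iii) gives $ST^t=\{R\}$ for a single $R\in\mathcal{R}$. Then $d_S+d_T=21=k+1$ and $1<d_S<d_T$, so the triple $(R,S,T)$ satisfies exactly the hypothesis~(\ref{eq3}) of Proposition~\ref{prop:lambda}, which asserts no such triple exists --- contradiction. This completes the proof. The only thing to check carefully is that the reduction to $d_{\mathcal{S}_1}=21$ and $|\mathcal{S}_1|=2$ is genuinely forced (ruling out $|\mathcal{S}_1|\le 1$ under the assumption $1\notin\{d_S\}$), and that the list of idempotents mod $20$ together with the sum constraint admits only $\{5,16\}$; both are short finite checks. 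Thus no coherent configuration with $k=20$ can have $1\notin\{d_S\mid S\in\mathcal{R}_{X,Y}\}$, proving $1\in\{d_S\mid S\in\mathcal{R}_{X,Y}\}$.
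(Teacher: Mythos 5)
Your proof is correct, but it reaches the contradiction by a genuinely different route than the paper. The paper invokes Lemma~\ref{lem:41} (with $q=5$) to land on $\{d_S\mid S\in\mathcal{S}_1\}=\{15,6\}$ --- that lemma only exploits the mod-$q$ part of Lemma~\ref{lem:ba} together with Proposition~\ref{prop:lambda} --- and then kills the valency $6$ by a divisibility argument on intersection numbers: $\mathrm{lcm}(6,6)\mid c_{SS^t}^R\cdot 20$ forces $3\mid c_{SS^t}^R$, which is incompatible with $d_S d_{S^t}=36$ via Lemma~\ref{lem:int}(i) (in effect, the observation that $6^2\not\equiv 6\pmod{20}$). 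You instead use the full congruence $d_S^2\equiv d_S\pmod{20}$ from Lemma~\ref{lem:ba}(i) up front, so the idempotents $\{0,1,5,16\}$ of $\Z/20\Z$ together with $d_S+d_T=21$ and $1<d_S<d_T<20$ leave only $\{5,16\}$; since $\gcd(5,16)=1$, Lemma~\ref{lem:int}(iii) gives $ST^t=\{R\}$ and Proposition~\ref{prop:lambda} finishes. Your version is self-contained (it bypasses Lemma~\ref{lem:41} entirely) and makes transparent why $k=20$ is rigid, at the cost of needing Proposition~\ref{prop:lambda} in the endgame where the paper needs only an intersection-number count. Two small points you flagged yourself and should spell out: $\mathcal{S}_1\neq\emptyset$ follows from Lemma~\ref{lem:bound}(i),(ii) (some $S$ has $2\nmid d_S$, hence $20\nmid d_S$), so under $1\notin\{d_S\}$ every element of $\mathcal{S}_1$ has $d_S\geq 2$, which is what actually forces $|\mathcal{S}_3|=0$ in Lemma~\ref{lem:bound}(iii) and rules out $|\mathcal{S}_1|\leq 1$; and $ST^t\neq\emptyset$ because $d_Sd_{T^t}=\sum_R c_{ST^t}^R d_R>0$. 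Neither is a gap.
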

\begin{proof}
Suppose not.
By Lemma~\ref{lem:41},
$\{d_S\mid S\in \mathcal{S}_1\}=\{15,6\}$.
Let $S\in \mathcal{R}_{X,Y}$ with $d_S=6$.
By Lemma~\ref{lem:int}(ii), $6\mid c_{SS^t}^Rk$ for $R\in SS^t\setminus\{\Delta_X\}$.
Thus, $3\mid c_{SS^t}^R$, which contradicts Lemma~\ref{lem:int}(ii).
\end{proof}

\begin{lemma}\label{lem:beta}
Suppose that each element of $\mathcal{R}_Y=\{\Delta_Y,R,R'\}$ is symmetric
and $\Pi_x=\{C_1,C_2,C_3\}$ is the equitable partition of $(Y,\mathcal{R}_Y)$ as in Section~2 for $x\in X$.
We define
\[\mbox{$\{\beta_{ij}\}_{1\leq i,j\leq 3}$ and $\{\gamma_{ij}\}_{1\leq i,j\leq 3}$}\]
such that
$\beta_{ij}=|R(y)\cap C_j|$ with $y\in C_i$ and
$\gamma_{ij}:=|R'(y)\cap C_j|$ with $y\in C_i$.
Then we have the following:
\begin{enumerate}
\item For each $i$ we have $\sum_{j=1}^3\beta_{ij}=k$;
\item For all $i$, $j$ with $i\ne j$ we have $\beta_{ij}+\gamma_{ij}=|C_j|$;
\item For each $i$ we have $\beta_{ii}+\gamma_{ii}=|C_i|-1$;
\item For all $i$, $j$ we have $|C_i|\beta_{ij}=\beta_{ji}|C_j|$;
\item We have $\beta_{11}+\beta_{22}+\beta_{33}=k-1$.
\end{enumerate}
\end{lemma}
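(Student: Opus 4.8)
\textbf{Proof proposal for Lemma~\ref{lem:beta}.}
The plan is to read off all five identities from the basic structure of an equitable partition together with the simplified intersection-number equations of Lemma~\ref{lem:int} and Lemma~\ref{lem:psd}, specialized to $Y$ (a fiber of prime order whose scheme has exactly three symmetric relations).

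First I would fix the notation: write $\mathcal{R}_Y=\{\Delta_Y,R,R'\}$ with $R=R^t$, $R'=R'^t$, and recall from Lemma~\ref{lem:psd}(i) that $d_R=d_{R'}=k$. The partition $\Pi_x=\{C_1,C_2,C_3\}$ is $\{T(x)\mid T\in\mathcal{R}_{Y,X}\}$ for a chosen $x\in X$; since $|\mathcal{R}_{X,Y}|=r=3$ by Corollary~\ref{cor:1}, this has exactly three cells, and it is equitable, so $\beta_{ij}=|R(y)\cap C_j|$ and $\gamma_{ij}=|R'(y)\cap C_j|$ are well defined (independent of the choice of $y\in C_i$). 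For (i): for $y\in C_i$ the sets $\Delta_Y(y)=\{y\}$, $R(y)$, $R'(y)$ partition $Y$, and intersecting with the partition $\{C_1,C_2,C_3\}$ gives, for $i\neq$ the index of $y$'s own cell contribution aside, $\sum_j|R(y)\cap C_j|=|R(y)|/\text{(nothing)}$ — more precisely $|R(y)|=\sum_{j}\beta_{ij}$, and $|R(y)|=d_R=k$ by (\ref{eq:dc1}) together with $|Y|=|X|$, giving $\sum_{j=1}^3\beta_{ij}=k$. For (ii) and (iii): again partition $C_j$ by which of $\Delta_Y,R,R'$ relates $y$ to its elements; if $i\neq j$ then $y\notin C_j$, so $C_j=(R(y)\cap C_j)\sqcup(R'(y)\cap C_j)$, giving $\beta_{ij}+\gamma_{ij}=|C_j|$; if $i=j$ then $y\in C_i$ and $y$ itself is removed, giving $\beta_{ii}+\gamma_{ii}=|C_i|-1$.

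For (iv) I would use the standard double-count for equitable partitions: count pairs $(y,z)\in R$ with $y\in C_i$, $z\in C_j$ in two ways, obtaining $|C_i|\beta_{ij}=|\{(y,z)\in R:y\in C_i,z\in C_j\}|=|C_j|\beta_{ji}$ (using $R=R^t$ so the transposed count uses $\beta_{ji}=|R(z)\cap C_i|$ for $z\in C_j$). For (v): sum the relation $|C_i|\beta_{ii}$ over $i$ against Lemma~\ref{lem:psd}(ii) applied to $R\in\mathcal{R}_Y$, namely $\sum_{S\in\mathcal{R}_Y}c_{SS^t}^{R}=k-1$; alternatively and more directly, note $\beta_{ii}=|R(y)\cap C_i|$ for $y\in C_i$ is exactly the value $c_{S_iS_i^t}^{?}$-type intersection number where $C_i=S_i^t(x)$, and by the equitable-partition identity $|R(y)\cap C_i|$ relates to $c$-numbers of the ambient coherent configuration; summing over $i$ and invoking Lemma~\ref{lem:psd}(ii) gives $\beta_{11}+\beta_{22}+\beta_{33}=k-1$. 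Concretely, $\sum_i|C_i|\beta_{ii}$ counts ordered pairs $(y,z)\in R$ lying in a common cell, and comparing with the cell-wise structure yields the claim; I would present whichever of these routes is cleanest.

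The only mildly delicate point is (v): it requires correctly matching $\beta_{ii}$ to the right intersection number of $\mathcal{C}$ and then citing Lemma~\ref{lem:psd}(ii), which is stated for $\mathcal{R}_Z$ with $Z$ a fiber of prime order — here $Z=Y$, and one must make sure the identity $\sum_{S\in\mathcal{R}_Y}c_{SS^t}^R=k-1$ is being applied to the single relation $R$ rather than summed spuriously over cells. Everything else is elementary partition bookkeeping. I expect the write-up to be short: items (i)--(iv) are one or two lines each, and (v) follows by the double count combined with Lemma~\ref{lem:psd}(ii).
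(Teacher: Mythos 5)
Your arguments for (i)--(iv) are correct and are exactly what the paper does (partitioning $Y$ by $\{\Delta_Y(y),R(y),R'(y)\}$ and double-counting $(C_i\times C_j)\cap R$ using $R=R^t$). The problem is (v), where both routes you sketch have a genuine gap. The double count gives $\sum_i |C_i|\beta_{ii}=|\{(y,z)\in R: y,z\text{ in a common cell}\}|$, i.e.\ the \emph{weighted} trace $\sum_i |C_i|\beta_{ii}$, not $\sum_i\beta_{ii}$; since the cells have unequal sizes (e.g.\ $(10,15,36)$ in the applications of Section~4), you cannot divide out to reach $\beta_{11}+\beta_{22}+\beta_{33}=k-1$. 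Your second route is no better: writing $C_i=S_i(x)$ one gets $\beta_{ii}=c_{RS_i^t}^{S_i^t}$, and Lemma~\ref{lem:int}(ii) turns this into $\beta_{ii}\,|C_i| = k\, c_{S_i^tS_i}^{R}$, so again the unequal $|C_i|$ obstruct any direct appeal to Lemma~\ref{lem:psd}(ii); moreover that lemma sums $c_{SS^t}^{R}$ over $S\in\mathcal{R}_Y$, not over $S\in\mathcal{R}_{Y,X}$, so the quantities simply do not match up. Statement (v) is false for equitable partitions of general association schemes, so no amount of "partition bookkeeping" can prove it: primality must enter.

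The paper's actual argument for (v) is representation-theoretic. The span $W$ of the characteristic vectors of $C_1,C_2,C_3$ is a module for the adjacency algebra $\mathcal{A}$ of $\mathcal{C}_Y$ on which $A_R$ acts by the matrix $(\beta_{ij})$. Its character is integer-valued, hence Galois-invariant, so by Theorem~\ref{thm:hu} all non-principal irreducibles occur in $W$ with a common multiplicity $m$; since $\dim W=3=1+m(|\mathcal{R}_Y|-1)$ forces $m=1$, $W$ affords the regular character of $\mathcal{A}$. Hence $\beta_{11}+\beta_{22}+\beta_{33}$ equals the regular trace of $A_R$, namely $\sum_{S\in\mathcal{R}_Y}c_{RS}^{S}=\sum_{S\in\mathcal{R}_Y}c_{SS^t}^{R}=k-1$ by Lemma~\ref{lem:int}(ii) and Lemma~\ref{lem:psd}(ii). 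This is the step (the identification of $W$ with the regular module via Hanaki--Uno) that is missing from your proposal.
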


\begin{proof}
The first four statements can be proved by checking the definition of equitable partitions
and using a double-way counting for $(C_i\times C_j)\cap R$.

Let $\mathcal{A}$ be the adjacency algebra of $\mathcal{C}_Y$ and $W$
the subspace spanned by the characteristic vectors of the cells of $\Pi_x$.
Then $W$ is a left $\mathcal{A}$-module
corresponding to the algebra homomorphism defined by $A_R\mapsto (\beta_{ij})$, $A_{R'}\mapsto (\gamma_{ij})$.

We claim that $W$ affords the regular character.
Let $\chi$ be the character afforded by $W$, i.e., the value of the adjacency matrix of $R$ is equal to
$\sum_{i=1}^3\beta_{ii}$.
Note that the character afforded by $W$ is integral valued but not a sum of principal character.
Since $\mathrm{dim}(W)=3$, it follows that $\chi$ is the sum of irreducible characters of $\mathcal{A}$.
This implies that $\chi$ is the regular character of $\mathcal{A}$, and, hence,
the trace of the matrix $(\beta_{ij})$ is equal to $k-1$ by Lemma~\ref{lem:psd}(ii) with Lemma~\ref{lem:int}(ii).
\end{proof}
\begin{proposition}
If $(k,m)=(30,61)$,
then $1\in \{d_S\mid S\in \mathcal{R}_{X,Y}\}$.
\end{proposition}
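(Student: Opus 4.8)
The plan is to run the same kind of argument as in the proof of Theorem~\ref{thm:main}(ii),(iii), but now pinning down the multiset $(d_S\mid S\in\mathcal{R}_{X,Y})$ precisely and then extracting a contradiction from the equitable-partition relations of Lemma~\ref{lem:beta}. Here $k=30=2\cdot3\cdot5$, so $e=3$ and $m=1+30(r-1)=61$ forces $r=3$; thus $\mathcal{R}_{X,Y}$ has exactly three elements, and $\mathcal{R}_X=\{\Delta_X,R_1,R_1^t\}$ (similarly for $Y$) with all nonidentity relations of valency $30$. Suppose for contradiction that $1\notin\{d_S\mid S\in\mathcal{R}_{X,Y}\}$. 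By Lemma~\ref{lem:bound}(iii) with $e=3$ we get $|\mathcal{S}_3|=0$ and $d_{\mathcal{S}_1}\le 1+2k=61$; since $|\mathcal{S}_2|\ge r-|\mathcal{S}_1|$ and $d_{\mathcal{S}_2}=30|\mathcal{S}_2|$, counting $m=61$ shows $|\mathcal{S}_1|\ge 2$, and by Lemma~\ref{lem:bound}(ii) $|\mathcal{S}_1|\le 3$. Lemma~\ref{lem:ba} says each $d_S$ with $S\in\mathcal{S}_1$ is $\equiv1\pmod{\text{the primes not dividing it}}$, and $d_S d_T\equiv0\pmod{30}$ for distinct $S,T\in\mathcal{R}_{X,Y}$; combined with $1<d_S<30$ and the sum constraint $d_{\mathcal{S}_1}=61-30(|\mathcal{S}_1|-1+|\mathcal{S}_2|)$, a short case analysis should leave only a couple of candidate multisets for $(d_S)$, each of the shape where two of the valencies are $>1$ and coprime (or nearly so).

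Then I would invoke Proposition~\ref{prop:lambda}: whenever $S,T\in\mathcal{R}_{X,Y}$ are distinct with $\gcd(d_S,d_T)=1$, Lemma~\ref{lem:int}(iii) gives $ST^t=\{R\}$ a singleton, and if moreover $d_S+d_T\ge k+1$ and $1<d_S<d_T$ we reach a contradiction. So the surviving cases must have $d_S+d_T\le k=30$ for the coprime pair, which together with the arithmetic congruences squeezes things further — I expect to be driven to a multiset like $\{6,25,\ldots\}$ or $\{10,21,\ldots\}$ or similar, and in each case either Proposition~\ref{prop:lambda} applies directly or Lemma~\ref{lem:int}(ii) (the divisibility $\operatorname{lcm}(d_R,d_S)\mid c_{RS}^T d_T$) rules it out, exactly as in the proof of Lemma~\ref{lem44} where $k=20$ was handled by noting $3\mid c_{SS^t}^R$ is incompatible with $\operatorname{lcm}$-divisibility. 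If one stubborn case remains — one where all three valencies are even so $SS^t=\{\Delta_X,R\}$ with $R$ symmetric and the coprimality trick is unavailable — then I would bring in Lemma~\ref{lem:beta} with $r=3$: the parameters $\beta_{ij}$, $\gamma_{ij}$ satisfy the row-sum relation (i), the symmetry/size relations (ii),(iii), the balance relation (iv) $|C_i|\beta_{ij}=|C_j|\beta_{ji}$, and the trace relation (v) $\sum\beta_{ii}=k-1=29$, where the cell sizes $|C_i|$ are exactly the valencies $d_S$ just determined; feeding the determined values $\{|C_1|,|C_2|,|C_3|\}$ into this linear system and requiring nonnegative integer solutions should be inconsistent, finishing the contradiction.

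The main obstacle I anticipate is the case analysis in the first paragraph: with $e=3$ there are genuinely more configurations of $(d_S)$ to list than in the $k=2q,3q,4q$ cases of Theorem~\ref{thm:main}, and one must be careful to use \emph{all} of the constraints — the global sum $m=61$, the pairwise congruence $d_Sd_T\equiv0\pmod{30}$, the self-congruence $d_S^2\equiv d_S\pmod{30}$, and the bound $d_{\mathcal{S}_1}\le 61$ — to cut the list down to just those handled by Proposition~\ref{prop:lambda} and Lemma~\ref{lem:beta}. The second potential snag is whether Proposition~\ref{prop:lambda} alone covers every coprime pair that arises, or whether the fallback to the equitable-partition system of Lemma~\ref{lem:beta} is actually needed; I would keep both tools ready and expect that between them every case closes, since $61$ being prime (used implicitly via $r=3$ and via the ``$1+k\mid|X|$'' step inside Proposition~\ref{prop:lambda}) is quite restrictive.
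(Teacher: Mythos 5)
Your strategy is the paper's strategy: with $r=3$ and $e=3$ reduce to $|\mathcal{S}_3|=0$ and $|\mathcal{S}_2|\le 1$, enumerate the candidate valency multisets, eliminate those containing a coprime pair via Proposition~\ref{prop:lambda}, and finish the survivors with the equitable-partition relations of Lemma~\ref{lem:beta}. The gap is that the two places where you write ``should'' are exactly where the proof lives, and one of your guesses about how the cases fall out is wrong. The enumeration (using $\sum_S d_S=61$, $d_S^2\equiv d_S$ and $d_Sd_T\equiv 0 \pmod{30}$, $d_S\neq 1$, at most one $d_S=30$) produces the list $\{6,25,30\}$, $\{15,16,30\}$, $\{10,21,30\}$, $\{10,15,36\}$, $\{6,15,40\}$; in the first three the coprime pair sums to exactly $31=k+1$, so Proposition~\ref{prop:lambda} applies with no slack at all. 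The surviving multisets are \emph{not} ``all valencies even'': they are precisely the ones with no coprime pair (the pairwise gcds are $2$, $3$, $5$), so Lemma~\ref{lem:beta} is not an optional fallback but the only remaining tool, and the inconsistency of the $\beta_{ij}$-system must actually be exhibited rather than presumed.

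Concretely, for $(|C_1|,|C_2|,|C_3|)=(10,15,36)$ the balance relation $|C_i|\beta_{ij}=|C_j|\beta_{ji}$ forces $12\mid\beta_{23}$ and $18\mid\beta_{13}$; the extreme values $0$ and $36$ are excluded because they would make $S_iS_j^t$ a singleton and trigger Proposition~\ref{prop:lambda}; after normalizing ($\beta_{23}=24$, $\beta_{13}=18$) the row sums $\sum_j\beta_{ij}=30$ together with the trace relation $\sum_i\beta_{ii}=29$ pin down $\beta_{12}+\beta_{21}$ to a value not divisible by $5$, contradicting $10\beta_{12}=15\beta_{21}$ (which forces $\beta_{12}+\beta_{21}=5t$). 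The case $(15,6,40)$ dies by the same computation with the roles of the cells permuted. Until this arithmetic is written out, what you have is a correct outline rather than a proof; none of your steps would fail, but the decisive content is missing.
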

\begin{proof}
Suppose not.

By Lemma~\ref{lem:bound}(ii),(iii), $|\mathcal{S}_1|\leq 3$ and $|\mathcal{S}_3|\leq 1$.
if $|\mathcal{S}_3|=1$, then $2k\leq d_{\mathcal{S}_3}<m=2k+1$, a contradiction.
Thus, $|\mathcal{S}_3|=0$.

Since $|\mathcal{S}_2|\leq 1$, it follows from Lemma~\ref{lem:ba} that
the following are only possible cases of $\{d_S\mid S\in \mathcal{R}_{X,Y}\}$:
\[\mbox{$\{30, 25,6\}$; $\{30,15,16\}$; $\{30,10,21\}$;$\{15,36,10\}$;$\{15,6,40\}$.}\]

The first three cases do not occur by Proposition~\ref{prop:lambda}
since each of them contains a pair of relatively prime numbers.

Note that $\{|C_i|\mid i=1,2,3\}=\{d_S\mid S\in \mathcal{R}_{X,Y}\}$
where $\Pi_x=\{C_1,C_2,C_3\}$ as in Lemma~\ref{lem:beta}.
Without loss of generality we may assume that
\[\mbox{$C_i=S_i(x)$ for $i=1,2,3$}.\]

From now on we shall use Lemma~\ref{lem:beta} many times without mentioning.

Suppose that
\[(|C_1|,|C_2|,|C_3|)=(10,15,36).\]
Since $|C_2|\beta_{23}=|C_3|\beta_{32}$,
we have $12\mid \beta_{23}$.
If $\beta_{23}\in \{0,36\}$, then $|S_2S_3^t|=1$, which
contradicts Proposition~\ref{prop:1}.
Replacing $R\in \mathcal{R}_Y$ by $R'$ if necessary we may assume that
$\beta_{23}=24$, and hence, $\beta_{32}=10$.

Since \[|C_1|\beta_{13}=|C_3|\beta_{31},\]
we have $18\mid \beta_{13}$.
If $\beta_{13}\in \{0,36\}$, then $|S_3S_1^t|=1$, which
contradicts Proposition~\ref{prop:lambda}.
Thus, $\beta_{13}=18$, and, hence, $\beta_{31}=5$.

By Lemma~\ref{lem:beta}(i),
\[\mbox{$\beta_{33}=15$, $\beta_{21}+\beta_{22}=6$ and
$\beta_{11}+\beta_{12}=12$.}\]
By Lemma~\ref{lem:beta}(v), $\beta_{11}+\beta_{22}=23$.
Thus, $\beta_{12}+\beta_{21}=19$, which contradicts $10\beta_{12}=15\beta_{21}$.
Therefore, $(d_S,d_T,d_U)=(10,15,36)$ does not occur.

Suppose \[(|C_1|,|C_2|,|C_3|)=(15,6,40).\]
Since $|C_2|\beta_{23}=|C_3|\beta_{32}$,
we have $20\mid \beta_{23}$.
If $\beta_{23}\in \{0,40\}$, then $|S_2S_3^t|=1$, which
contradicts Proposition~\ref{prop:lambda}.
We may assume that $\beta_{23}=20$, and hence, $\beta_{32}=3$.

Since
\[|C_1|\beta_{12}=|C_2|\beta_{21},\]
we have $5\mid \beta_{21}$.
By Lemma~\ref{lem:beta}(i),
\[\beta_{21}+\beta_{22}+20=30.\]
Thus, $5\mid \beta_{22}$.
Replacing $R\in \mathcal{R}_Y$ by $R'$ if necessary we may assume that
$\beta_{22}=5$, and hence, $\beta_{21}=5$ and $\beta_{12}=2$.

By Lemma~\ref{lem:beta}(i),(v),
we have
\[\mbox{$\beta_{11}+\beta_{13}=28$, $\beta_{31}+\beta_{33}=27$ and $\beta_{11}+\beta_{33}=24$.}\]
Thus, $\beta_{13}+\beta_{31}=31$,
which contradicts $15\beta_{13}=40\beta_{31}$.

This completes the proof.
\end{proof}

\begin{proposition}
If $(k,m)=(44,89)$,
then $1\in \{d_S\mid S\in \mathcal{R}_{X,Y}\}$.
\end{proposition}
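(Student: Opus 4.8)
The plan is to argue by contradiction, assuming that $1\notin\{d_S\mid S\in\mathcal{R}_{X,Y}\}$ and deriving a contradiction exactly in the spirit of the previous proposition for $(k,m)=(30,61)$. First I would apply Lemma~\ref{lem:bound}(ii),(iii) with $k=44=2^2\cdot 11$, so that $e=2$, $|\mathcal{S}_1|\le 2$ and $k|\mathcal{S}_3|+d_{\mathcal{S}_1}\le 1+k(e-1)=1+44=45$. Since any $S\in\mathcal{S}_3$ has $d_S\ge 2k=88$ and $m=2k+1=89$, we get $|\mathcal{S}_3|=0$, and hence $|\mathcal{S}_2|=r-|\mathcal{S}_1|\ge r-2$. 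Applying Lemma~\ref{lem:41} directly (with $q=11$, noting $3\nmid q+1=12$ is false, so Lemma~\ref{lem:41} is exactly the right tool here and Theorem~\ref{thm:main}(iii) does \emph{not} cover $q=11$), we obtain $|\mathcal{S}_1|=2$ and $\{d_S\mid S\in\mathcal{S}_1\}=\{3q,q+1\}=\{33,12\}$.

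Next I would write $\mathcal{S}_1=\{S,T\}$ with $d_S=12$, $d_T=33$, so $d_S+d_T=45=k+1$ and $1<d_S<d_T$. By Lemma~\ref{lem:int}(iii), since $\gcd(12,33)=3$, the complex product $ST^t$ has at most $3$ elements, so Proposition~\ref{prop:lambda} does not immediately apply (that proposition needs $ST^t$ to be a singleton). So the strategy of the $(30,61)$ case applies: one passes to the equitable partition $\Pi_x=\{C_1,C_2,C_3\}$ of $(Y,\mathcal{R}_Y)$ attached to a point $x\in X$, where $r=3$ forces $|\mathcal{R}_Y|=3$; here $\{|C_i|\}=\{d_S\mid S\in\mathcal{R}_{X,Y}\}=\{12,33,44\}$. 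One then needs the elements of $\mathcal{R}_Y$ to be symmetric in order to use Lemma~\ref{lem:beta}; this should follow because $|Y|$ is an odd prime and $\mathcal{R}_Y=\{\Delta_Y,R,R'\}$ with $d_R=d_{R'}=k$ even — a non-symmetric pair would force $R'=R^t$, but then an arithmetic parity/closure argument (as used inside the proof of Proposition~\ref{prop:lambda}) rules this out, or one checks it directly from $|\mathcal{R}_Y|=3$ and $m$ odd. Then set up $\{\beta_{ij}\}$, $\{\gamma_{ij}\}$ as in Lemma~\ref{lem:beta}.

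The computational heart, and the step I expect to be the main obstacle, is then to run the divisibility relations $|C_i|\beta_{ij}=|C_j|\beta_{ji}$ from Lemma~\ref{lem:beta}(iv) together with the row-sum relations (i),(ii),(iii) and the trace relation (v) $\beta_{11}+\beta_{22}+\beta_{33}=k-1=43$, and reach a numerical impossibility. With $(|C_1|,|C_2|,|C_3|)=(12,33,44)$ one gets $\mathrm{lcm}$-type constraints: $12\beta_{13}=44\beta_{31}$ forces $11\mid\beta_{13}$ (so $\beta_{13}\in\{0,11,22,33,44\}$), $33\beta_{23}=44\beta_{32}$ forces $4\mid\beta_{23}$, and $12\beta_{12}=33\beta_{21}$ forces $11\mid\beta_{12}$. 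Whenever some off-diagonal $\beta_{ij}$ hits $0$ or $|C_j|$, the corresponding product $S_iS_j^t$ is a singleton, and one invokes Proposition~\ref{prop:1} (if it is $S_iS_j^t$ with $\{i,j\}\ni$ the index of the size-$44$ cell, giving $|S_iS_j^t|=1$ with one of the relations having $d=k$, contradicting the bound $r>\dots$) or Proposition~\ref{prop:lambda} (if it pairs the size-$12$ and size-$33$ cells, since $12+33=45\ge k+1$ and $1<12<33$); this kills the boundary cases. Replacing $R$ by $R'$ if needed fixes the remaining middle values, and the combination of the row sums with $\beta_{11}+\beta_{22}+\beta_{33}=43$ should produce a pair of off-diagonal entries whose forced sum violates one of the scaling identities $|C_i|\beta_{ij}=|C_j|\beta_{ji}$ — exactly the contradiction pattern of the $(30,61)$ proof. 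I would carry out this case analysis on the value of $\beta_{13}$ (or $\beta_{23}$), eliminating each admissible value in turn; the bookkeeping is the only real work, and no new idea beyond Lemma~\ref{lem:beta}, Proposition~\ref{prop:1} and Proposition~\ref{prop:lambda} should be required.
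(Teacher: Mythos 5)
Your proposal follows essentially the same route as the paper: Lemma~\ref{lem:41} with $q=11$ pins the degree set down to $\{12,33,44\}$, and then the equitable partition $\Pi_x$ together with the relations of Lemma~\ref{lem:beta} (divisibility from (iv), row sums from (i), trace $=k-1$ from (v), boundary cases killed by Proposition~\ref{prop:lambda}) yields the numerical contradiction. The case analysis you defer does close exactly as you predict --- e.g.\ $11\mid\beta_{11}$ forces $\beta_{11}\in\{0,11\}$, and in either case $\beta_{23}+\beta_{32}$ comes out incompatible with $33\beta_{23}=44\beta_{32}$ --- so the plan is correct and matches the paper's proof.
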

\begin{proof}
Suppose not. By Lemma~\ref{lem:41},
the following is a unique possible case of $\{d_S\mid S\in \mathcal{R}_{X,Y}\}$:
\[\{12,33,44\}.\]

Without loss of generality we may assume that
\[\mbox{$C_i=S_i(y)$ for $i=1,2,3$ and $(|C_1|,|C_2|,|C_3|)=(12,33,44)$.}\]
Since $12\beta_{12}=33\beta_{21}$, $\beta_{12}\in \{0,11,22,33\}$.
Proposition~\ref{prop:lambda} forces $\beta_{12}\in \{11,22\}$,
and we may assume that $\beta_{12}=22$ by replacing $R\in \mathcal{R}_Y$ by $R'$.
Then $\beta_{21}=8$.

Note that $11$ divides $\beta_{13}$ and so does $\beta_{11}$ by Lemma~\ref{lem:beta}(i).
We divide our consideration into the following two cases $\beta_{11}=11$ or $0$.

Suppose $\beta_{11}=11$. Then $\beta_{13}=11$ and $\beta_{31}=3$.
By Lemma~\ref{lem:beta}(i),(v),
\[\mbox{$\beta_{22}+\beta_{23}=36$, $\beta_{32}+\beta_{33}=41$ and $\beta_{22}+\beta_{33}=32$.}\]
Therefore, $\beta_{23}+\beta_{32}=45$, which contradicts $33\beta_{23}=44\beta_{32}$.

Suppose $\beta_{11}=0$. Then
\[\mbox{$\beta_{13}=22$ and $\beta_{31}=6$.}\]
By Lemma~\ref{lem:beta}(i),(v),
\[\mbox{$\beta_{22}+\beta_{23}=36$, $\beta_{32}+\beta_{33}=38$ and $\beta_{22}+\beta_{33}=43$.}\]
Therefore, $\beta_{23}+\beta_{32}=34$, which contradicts $33\beta_{23}=44\beta_{32}$.

This completes the proof.
\end{proof}

\begin{lemma}\label{lem:71}
If $(k,m)=(35,71)$
then $\{d_S\mid S\in \mathcal{R}_{X,Y}\}=\{15,21,35\}$.
\end{lemma}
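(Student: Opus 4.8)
The plan is to proceed exactly as in the previous two propositions, by contradiction. Assuming $1\notin\{d_S\mid S\in\mathcal{R}_{X,Y}\}$ and $k=35$, first I would pin down the possible multi-sets $\{d_S\mid S\in\mathcal{R}_{X,Y}\}$ using the structural lemmas. Since $k=35=5\cdot 7$ has $e=2$ prime divisors, Lemma~\ref{lem:bound}(ii),(iii) give $|\mathcal{S}_1|\leq 2$ and $k|\mathcal{S}_3|+d_{\mathcal{S}_1}\leq 1+k(e-1)=36$. The bound on $d_{\mathcal{S}_1}$ forces $|\mathcal{S}_3|=0$, so $r=|\mathcal{S}_1|+|\mathcal{S}_2|$ with every degree in $\mathcal{S}_2$ equal to $35$. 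Because $1\notin\{d_S\}$ we have $|\mathcal{S}_1|\geq 1$ (otherwise $m=1+35(r-1)=35r$, impossible), and in fact $|\mathcal{S}_1|=2$: if $\mathcal{S}_1=\{S\}$ then $m=d_S+35(r-1)$ with $m=1+35(r-1)$ gives $d_S=1$. So $\mathcal{S}_1=\{S,T\}$ with $d_S+d_T=k+1=36$ (from $m=1+35(r-1)=d_S+d_T+35(r-2)$), each degree $\equiv 1\bmod{\gcd}$ appropriately: by Lemma~\ref{lem:ba}(i) each $d_S$ with $S\in\mathcal{S}_1$ satisfies $d_S^2\equiv d_S\bmod{35}$, i.e. $d_S(d_S-1)\equiv0\bmod{35}$, and by Lemma~\ref{lem:ba}(ii) $d_Sd_T\equiv0\bmod{35}$. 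Combined with $1<d_S<d_T<35$ and $d_S+d_T=36$, a short case check on pairs summing to $36$ leaves $\{d_S,d_T\}=\{15,21\}$ as the only survivor (e.g. $15\cdot 16\equiv0$, $20\cdot 21\equiv0$, $5\cdot 6\not\equiv0$, $6\cdot 7\not\equiv 0$, etc.), so $\{d_S\mid S\in\mathcal{R}_{X,Y}\}=\{15,21,35\}$ as claimed.

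\textbf{Key steps in order.} (1) From $k=35$, $e=2$, deduce via Lemma~\ref{lem:bound}(iii) that $|\mathcal{S}_3|=0$ and $d_{\mathcal{S}_1}\leq 36$. (2) From $1\notin\{d_S\}$ and Lemma~\ref{lem:int}(iv), rule out $|\mathcal{S}_1|\in\{0,1\}$, so $|\mathcal{S}_1|=2$ and $d_S+d_T=36$ for $\mathcal{S}_1=\{S,T\}$. (3) Apply Lemma~\ref{lem:ba}(i) to each of $d_S,d_T$: $d_S\equiv 0$ or $1\bmod 5$ and $\bmod 7$, i.e. the congruences $d(d-1)\equiv0\bmod 5$ and $\bmod 7$; use coprimality of $d$ and $d-1$ to split these. (4) Enumerate the integer pairs $(d_S,d_T)$ with $1<d_S<d_T$, $d_S+d_T=36$, both satisfying the congruences, and $d_Sd_T\equiv 0\bmod{35}$ (Lemma~\ref{lem:ba}(ii)); conclude $\{d_S,d_T\}=\{15,21\}$.

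\textbf{Main obstacle.} This lemma is genuinely easy compared with the earlier propositions — there is no equitable-partition bookkeeping and no appeal to Proposition~\ref{prop:lambda}; it is purely the arithmetic of Lemma~\ref{lem:ba} and Lemma~\ref{lem:bound}. The only point requiring a little care is step~(3)–(4): making sure the congruence conditions from Lemma~\ref{lem:ba}(i) are correctly translated (the statement $d_S^2\equiv d_S\bmod k$ means $d_S$ is idempotent mod $35$, hence determined mod $5$ and mod $7$ separately to be $0$ or $1$), and then checking that among all such pairs summing to $36$ only $\{15,21\}$ also has product divisible by $35$. I would lay out the few candidate values of $d_S$ in $\{2,\dots,17\}$ that are idempotent mod $35$ — namely $d_S\in\{15,21\}$ after intersecting with the sum constraint, since the idempotents mod $35$ in range are $15$ and $21$ (from $15\equiv 0\bmod5,\ 1\bmod7$ and $21\equiv1\bmod5,\ 0\bmod7$) — and note their complement to $36$ is the other one, closing the case. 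No step here is expected to present real difficulty.
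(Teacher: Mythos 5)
Your proposal is correct and follows essentially the same route as the paper, whose entire proof is the one-line remark that Lemma~\ref{lem:bound}(i),(iii) and Lemma~\ref{lem:ba} force $\{15,21,35\}$; your steps (ruling out $\mathcal{S}_3$, getting $|\mathcal{S}_1|=2$ with $d_S+d_T=36$, and using idempotence mod $35$ to isolate $\{15,21\}$) are exactly the intended expansion of that remark. Note only that you (rightly) made explicit the hypothesis $1\notin\{d_S\mid S\in\mathcal{R}_{X,Y}\}$, which the lemma as stated omits but which is clearly intended from the surrounding discussion of condition (\ref{eq11}).
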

\begin{proof}
Applying Lemma~\ref{lem:bound}(i),(iii) and Lemma~\ref{lem:ba}
we conclude that $\{15,21,35\}$ is a unique case of
$\{d_S\mid S\in \mathcal{R}_{X,Y}\}$.
\end{proof}

We notice that the lemmata given in this section justify
the elimination given in Introduction.

\vspace{10mm}

\textbf{Acknowledgement}
First, the authors would like to express gratitude to Professor Akihiro Munemasa for his valuable suggestions
in revising the Theorem \ref{thm:main2} of the paper.

Second, the authors are grateful to anonymous referees for their kind attention to our work, care-
fully checking and error corrections.
The introduction was revised based on a comment of one of the referees, we appreciate that.

The first author was supported by the Persian Gulf University Research Council under the contract No: \textbf{PGU/FS/49--1/1390/1468}.

\end{document}